\newcommand{\tnz}{\otimes}
\newcommand{\mf}{\mathfrak}
\newcommand{\g}{\mf{g}}
\newcommand{\h}{\mf{h}}
\newcommand{\nl}{\mf{l}}
\newcommand{\so}{\mf{so}}
\newcommand{\gl}{\mf{gl}}
\newcommand{\sll}{\mf{sl}}
\newcommand{\p}{\mf{p}}
\newcommand{\q}{\mf{q}}
\newcommand{\tr}{{\rm tr}}
\newtheorem{theorem}{Theorem}[section]
\newtheorem{proposition}[theorem]{Proposition}
\newtheorem{lemma}[theorem]{Lemma}
\newtheorem{corollary}[theorem]{Corollary}
\theoremstyle{remark}
\newcommand{\EE}{\mathcal E}
\newcommand{\LL}{\mathcal U}
\begin{document}
\title[On Lie and associative algebras containing inner derivations]{On Lie and associative algebras containing inner derivations}

\author{Matej Bre\v sar}
\author{\v Spela \v Spenko}
\address{M. Bre\v sar,  Faculty of Mathematics and Physics,  University of Ljubljana,
 and Faculty of Natural Sciences and Mathematics, University
of Maribor, Slovenia} \email{matej.bresar@fmf.uni-lj.si}
\address{\v S. \v Spenko,  Institute of  Mathematics, Physics, and Mechanics,  Ljubljana, Slovenia} \email{spela.spenko@imfm.si}

\begin{abstract} 
We describe  subalgebras of the Lie algebra  $\mf{gl}(n^2)$ that contain all inner derivations of  $A=M_n(F)$ (where $n\ge 5$ and $F$ is an algebraically closed field of characteristic $0$). In a more general context  where $A$ is a prime algebra satisfying certain technical restrictions, we establish a density theorem for the associative algebra generated by all inner derivations of $A$. 
\end{abstract}

\keywords{Inner derivation, Lie algebra, matrix algebra, prime algebra.}
\thanks{
2010 {\em Math. Subj. Class.} 15A30, 15A86, 16D60, 16N60, 16R60, 17B05, 17B10, 17B40.}
\thanks{Supported by ARRS Grant P1-0288.}

\maketitle
\section{Introduction}

In \cite{PD} Platonov and \DJ okovi\' c described algebraic subgroups of $GL(n^2)$, the group of invertible operators of the space $M_n(F)$ of all $n\times n$ matrices over an algebraically closed field $F$ with char$(F)=0$, that contain the group of all inner automorphisms of the algebra $M_n(F)$. After that they showed that this description can be effectively applied to  various linear preserver problems.
% (a recent example of such an application is given in   \cite{ABSV}).  

In the recent literature one can find  numerous results characterizing derivations through conditions analogous to those  satisfied by automorphisms and previously studied in the context of linear preserver problems.  Therefore it seems natural to ask whether
it is possible to describe subalgebras of $\mf{gl}(n^2)$, the Lie algebra of all operators of $M_n(F)$, that contain the Lie algebra of all
inner derivations, and thereby enable one to use a more conceptual approach to such problems on derivations. A partial description was  in fact obtained already by  Platonov and \DJ okovi\' c, but they  were more interested in corresponding Lie groups. In Section 2 we will complete their work and give a complete list of subalgebras of  $\mf{gl}(n^2)$ that contain inner derivations. Several corollaries will be derived in order to justify the usefulness of this result.

In Section 3 we will consider only the associative algebra $\mathcal D$ generated by all inner derivations, but on considerably more general algebras  than $M_n(F)$. A thorough discussion on $\mathcal D$, in both algebraic and analytic context,  was given in the  recent work by T. Shulman and
V. Shulman \cite{SS}. They were primarily interested in the form of operators in $\mathcal D$. Our approach is somewhat different. We will show that under suitable assumptions $\mathcal D$ acts densely on a  Lie ideal of the algebra in question.

%We fix a field $F$. All algebras in this paper are assumed to be algebras over $F$. We also assume that our associative algebras are unital.

\section{Lie algebras containing derivations}

\subsection{Notation}We begin by introducing the notation for this section. 
We write  $M_n$ for $M_n(F)$. Let us fix two assumptions that will be used  throughout the section:
 \begin{itemize}
 \item $F$ is   algebraically closed and  char$(F)=0$,
 \item $n\ge 5$.
 \end{itemize}
 Let us emphasize that these two assumptions will not be repeated in the statements of our results. 
 They both are connected with the Platonov-\DJ okovi\' c paper \cite{PD}. Actually, 
 \cite{PD} also deals with the situation where $n < 5$; however, this requires some extra care. For simplicity we will avoid this.

By $1$ we denote  the   identity matrix in $M_n$, by $e_{ij}$ the standard matrix units in $M_n$, and by 
 $a'$ the transpose of $a\in M_n$.  The Lie algebra $\mf{gl}(n^2)$ can be identified with $M_n\otimes M_n^{opp}$, where  the Lie bracket is given by
$$[a\otimes b,c\otimes d]=(a\otimes b)(c\otimes d)-(c\otimes d)(a\otimes b)=ac\otimes db- ca\otimes bd.$$
In this sense
$$\mf{g}=\{a\otimes 1-1\otimes a\,|\, a\in M_n \}$$
is equal to the Lie algebra of all inner derivations of $M_n$ (of course, all derivations on $M_n$ are inner).  We are interested in Lie subalgebras of $\mf{gl}(n^2)$ that contain $\mf{g}$.

We have a direct decomposition $M_n=M_n^0+F1$, where 
 $$M_n^0 = \{a\in M_n\,|\, \tr(a) =0\}.$$
 Clearly, every element in $\g$ can be uniquely written as $a\otimes 1-1\otimes a$ with $a\in M_n^{0}$, and 
 $\mf{g}$ is a simple Lie algebra isomorphic to $\mf{sl}(n)$. 
We denote by $\mf{gl}(n^2-1)$ the Lie subalgebra of $\mf{gl}(n^2)$ consisting of elements that preserve the decomposition and send $1$ into $0$. Next we set 
$$\mf{sl}(n^2-1)=\mf{sl}(n^2)\cap \mf{gl}(n^2-1),$$
$$\mf{so}(n^2) = \{a\in \mf{gl}(n^2)\,|\, \tr\bigl(a(x)y + xa(y)\bigr)=0\,\,\, \mbox{for all $x,y\in M_n$}\},$$ 
$$\mf{so}(n^2-1)=\mf{so}(n^2)\cap \mf{gl}(n^2-1).$$

Note that any Lie subalgebra of $\mf{gl}(n^2)$ containing $\mf{g}$ can be considered as a $\mf{g}$-module. We now state a list of simple $\mf{g}$-submodules of $\mf{sl}(n^2)$, as given in \cite[p. 170]{PD}.  By $\epsilon_i$ we denote the linear functional on diagonal matrices determined by $\epsilon_i(e_{jj})=\delta_{ij}$.
\begin{center}
\begin{tabular}{|c|c|c|c|}
\hline
Module&~~~Highest weight~~~&~~~Dimension~~~&~~~~~~~~Highest weight vector~~~~~~~~\\
\hline
$\mf{g}$& $\epsilon_1 - \epsilon_n$ & $n^2-1$ & $e_{1n}\otimes 1 - 1\otimes e_{1n}$\\ 
\hline
$V_1$& $\epsilon_1 + \epsilon_2 - 2\epsilon_n$ & $\frac{1}{4}(n^2-1)(n^2 -4)$ & $e_{1n}\otimes e_{2n}  - e_{2n}\otimes e_{1n}$\\ 
\hline
$V_2$& $2\epsilon_1 - \epsilon_{n-1} - \epsilon_n$ & $\frac{1}{4}(n^2-1)(n^2 -4)$ & $e_{1,n-1}\otimes e_{1n}  - e_{1n}\otimes e_{1,n-1}$\\ 
\hline
$V_3$& $\epsilon_1 - \epsilon_{n}$ & $n^2-1$ & $\sum_i ( e_{1i}\otimes e_{in}  - e_{in}\otimes e_{1i})$\\ 
\hline
$V_4$& $\epsilon_1 - \epsilon_n$ & $n^2-1$ & $e_{1n}\otimes 1  + 1\otimes e_{1n}$\\ 
\hline
$V_5$& $2\epsilon_1 - 2\epsilon_n$ & $\frac{1}{4}n^2(n-1)(n+3)$ & $e_{1n}\otimes e_{1n} $\\ 
\hline
$V_6$& $\epsilon_1 + \epsilon_{2} - \epsilon_{n-1}-\epsilon_n$ & $\frac{1}{4}n^2(n+1)(n-3)$ & $e_{2,n-1}\otimes e_{1n}  + e_{1n}\otimes e_{2,n-1}$ \\
&  & &$- e_{1,n-1}\otimes e_{2n} - e_{2n}\otimes e_{1,n-1} $\\ 
\hline
$V_7$& $\epsilon_1 - \epsilon_{n}$ & $n^2-1$ & $\sum_i (e_{1i}\otimes e_{in}  + e_{in}\otimes e_{1i})$\\ 
\hline
$V_8$& $0$ & $1$ & $1\otimes 1 - n\sum_{i,j}  e_{ij}\otimes e_{ji} $\\ 
\hline
$\mf{p}$& $\epsilon_1 - \epsilon_{n}$ & $n^2-1$ & $\sum_i e_{1i}\otimes e_{in} $\\ 
\hline
$\mf{q}$& $\epsilon_1 - \epsilon_{n}$ & $n^2-1$ & $\sum_i e_{in}\otimes e_{1i} $\\ 
\hline
$V_4'$& $\epsilon_1 - \epsilon_{n}$ & $n^2-1$ & $n(e_{1n}\otimes 1 + 1\otimes e_{1n})$ \\
&  & &$ - 2\sum_i (e_{1i}\otimes e_{in} + e_{in}\otimes e_{1i})$\\ 
\hline
\end{tabular}
\end{center}

%(Here, $\epsilon_i$ denotes the linear functional on diagonal matrices determined by $\epsilon_i(e_{jj})=\delta_{ij}$.)

By $T_0$ we denote the set of all diagonal matrices of the form $$\left( \begin{array}{cc}
\alpha 1_{n^2 -1}& 0 \\
 0 & \beta \end{array} \right),$$
where $\alpha,\beta \in F$ and $1_{n^2 -1}$ is the identity matrix in $M_{n^2-1}$. Its subset consisting of all such matrices with $\alpha,\beta\in F^*$ will be denoted by $T$. 
Clearly, $T$ is a group.

The notation introduced so far is taken from \cite{PD}. 
Let us introduce
another $\g$-module that will appear in the course of the proof of Proposition \ref{list} below. 
As we shall see, it is possible to describe it in terms of $V_4$, $\p$, and $\q$, but first we  give a more explicit description. 
Take $\lambda \in F$, $\lambda\ne -\frac{2}{n}$, and set
$$
W(\lambda) = \{x\mapsto ax + xa + \lambda \tr(x)a - \frac{2\lambda}{n\lambda + 2}\tr(xa)1\,|\,a\in M_n^0\}.
$$
One can verify that $\g + W(\lambda)$ is a Lie algebra, and implicitly we will in fact make this verification in the proof of  Proposition \ref{list}.

%By restricting adjoint representation of $\mf{gl}(n^2)$ to $\mf{g}$, we may consider $\mf{gl}(n^2)$ as $\mf{g}$-module.
Let us point out that we will use both symbols $\subseteq $ and $\subset$. The latter will be of course used to denote a proper subset.

\subsection{Extracts from the Platonov-\DJ okovi\' c paper} This section rests heavily on the work by  Platonov and \DJ okovi\' c \cite{PD}. 
We will now record several facts that are more or less explicitly stated in the proof of \cite[Theorem A]{PD}. They will be used in the next subsections.

In addition to the notation introduced above, we let 
 $\mf{l}$ denote a Lie subalgebra of $\mf{sl}(n^2)$ that contains $\mf{g}$. Therefore $\mf{l}$ can be treated as a $\mf{g}$-module.

\begin{enumerate}
\item\label{dec}
 $\sll(n^2)$, considered as a $\mf{g}$-module, can be directly decomposed into simple
 $\g$-modules as follows:
$$\sll(n^2)=\g+\sum_{i=1}^8 V_i.$$

\item \label{iso} The   $\g$-modules $\g,V_3,V_4,V_7,\p,\q,V_4'$ are isomorphic, while the others from the above list  are pairwise nonisomorphic
(cf. \cite[Theorem 20.3A]{H}).

%.\footnote{To sledi iz utezi; Theorem 20.3A v \cite{H} pove, da sta
%enostavna modula enake maksimalne utezi izomorfna. Zapisano je sicer standardna ciklicna, ki sta enostavna, vendar vsi enostavni koncno dimenzionalni so standardni ciklicni. Standardni
%ciklicni so namrec tisti, ki so generirani z maksimalnim vektorjem, ki pa v vseh koncno dimenzionalnih $\g$-modulih obstaja. Zakljucek: mislim, da razlaga ni potrebna, in se te lahko smatra za
%dejstvo, ki ga je mogoce razbrati iz clanka.}

\item\label{dec2}
We have the following direct decompositions  into simple  $\g$-modules:
$$\so(n^2-1)=\g+V_1+V_2,$$
$$\so(n^2)=\so(n^2-1)+V_3,$$
$$\sll(n^2-1)=\so(n^2-1)+V_4^{'}+V_5+V_6,$$
$$\sll(n^2)=\sll(n^2-1)+\p+\q+V_8.$$

\item\label{ort}
 $\so(n^2)$ is the Lie subalgebra of $\mf{gl}(n^2)$ consisting of all skew-symmetric tensors.

\item\label{max}
If  $\mf{l}$ is properly contained in 
 $\sll(n^2-1)$ and properly contains $\g$, then  $\mf{l}=\so(n^2-1)$.

\item\label{v5,v6}
If $V_5\subset \nl$ or $V_6\subset \nl$, then $\sll(n^2-1)\subseteq \nl$.

%\item \label{v4} $\g+V_4^{'}$ is not a Lie algebra.

%\item\label{[p,q]} $[\p,\q]=\sll(n^2-1)+V_8$. 
%($[
%\left( \begin{array}{cc}
%0 & x \\
%0 & 0 \end{array} \right),
%\left( \begin{array}{cc}0 & 0 \\y & 0 \end{array} \right)]=\left( \begin{array}{cc}xy & 0 \\0 & -yx \end{array} \right).$)

\item\label{p+q}  If $\p+\q\subset \nl$, then $\nl=\sll(n^2)$.

\item\label{v8}
 $V_4^{'},\p,\q$ are pairwise nonisomorphic as $(\g+V_8)$-modules.

\item\label{v8'}
If $V_8\subset \nl$ and $\p+\q\not\subset \nl$, then $\nl\subseteq \sll(n^2-1)+\p+V_8$ or $\nl\subseteq \sll(n^2-1)+\q+V_8$.

\item\label{v1,v2}
If $V_1\subset \nl$ or $V_2\subset \nl$, then $\so(n^2-1)\subseteq \nl$.

\item\label{iso2}
As an $\so(n^2-1)$-module, $\sll(n^2)$ is a direct sum of five simple modules, namely $$\so(n^2-1),\,V_4^{'}+V_5+V_6,\,\p,\,\q,\,V_8.$$
 Among them only $\p$ and $\q$ are isomorphic.

\item\label{w}
If $W$ is a simple submodule of $\p+\q$, different from $\p$ and $\q$, 
then there exists $t\in T$ such that $W=tV_3t^{-1}$.

\item\label{3} If $W$ is a $\g$-submodule of $\p+\q$ and $W\neq \p,\q$, then $\g+W$  is not a Lie algebra. Next, if
$\nl\subset \g+V_4+\p+\q$, then $\nl$ can not be written as a sum of three simple $\g$-modules.

\end{enumerate}

The $\g$-modules $\p,\q, V_4$ and $V_8$ will  play particularly prominent roles. Therefore we will now  give  some further comments about them, which will be used in the proof of  Proposition \ref{list} without reference.

With respect to the  decomposition $M_n=M_n^0+ K1$ we can represent 
 $\p$ and $\q$  with matrices of the form 
$$
\left( \begin{array}{cc}
0 & x \\
0 & 0 \end{array} \right)\mathrm{and}
\left( \begin{array}{cc}
0 & 0 \\
x' & 0 \end{array} \right)\mathrm{, respectively},
$$
where $x\in F^{n^2-1}$ is an arbitrary column vector.
%Further,
% $\phi:\p\to \q$ given by
%$$
%\left( \begin{array}{cc}
%0 & x \\
%0 & 0 \end{array} \right)\mapsto
%\left( \begin{array}{cc}
%0 & 0 \\
%x^{'} & 0 \end{array} \right)
%$$
%is a $\g$-module isomorphism.
Thus, $\p$ consists of all maps of the form 
$x\mapsto \tr(x)a$ with $a\in M_n^0$, 
and $\q$ consists of all maps of the form 
$x\mapsto \tr(xa)1$ with $a\in M_n^0$.
%Now it can be easily verified that $[p,sl(n^2-1)]\subseteq p$, $[q,sl(n^2-1)]\subseteq q.$
Next, $V_8$ consists of scalar multiples of the diagonal matrix $$\left( \begin{array}{cc}
1_{n^2 -1}& 0 \\
 0 & 1-n^2 \end{array} \right),$$
 and $V_4=\{a\otimes 1 + 1  \otimes a \,|\,a\in M_n^0\}$. 

As already mentioned, $\g,\p,\q$, and $V_4$ are isomorphic simple $\g$-modules. Since $F$ is algebraically closed,  
up to scalar multiplication there is exactly one isomorphism from $\g$ into any of modules $\p,\q$, and $V_4$. One can check that these are
$$\phi_1:\g\to \p,\; \phi_1(a\tnz 1-1\tnz a)(x)=\tr(x)a,$$ 
$$\phi_2:\g\to \q,\; \phi_2(a\tnz 1-1\tnz a)(x)=\tr(xa),$$ 
$$\phi_3:\g\to V_4,\;  \phi_3(a\tnz 1-1\tnz a)=a\otimes 1+1\otimes a.$$

Finally, since the highest weight vector of  $V_4'$ is a linear combination of the highest weight vectors of  $\p$, $\q$ and $V_4$, it easily follows that $\p+\q+V_4=\p+\q+V_4'$. 

%We will now briefly comment notion of the maximal vector from the above table. 
%If $v^+$ is a maximal vector for the simple $\g$-module $V$, then $V$ is spanned by the vectors $g_1^{i_1}\cdots g_m^{i_m}.v^+$, $i_j,m\in \mathbb{N}$, $g_j^{i_j}\in \g$. 
%Since maximal vector of $V_4'$ is a linear combination of maximal vectors of $\p$, $\q$ and $V_4$, it follows that $\p+\q+V_4=\p+\q+V_4'$. 

\subsection{Main result} Besides the results from the previous subsection we also  need the following well-known fact (see e.g. \cite[Exercise 17, p. 124]{hall}).
Let $\mathfrak{h}$ be a  Lie algebra and let $V$ be a finite dimensional $\mathfrak{h}$-module. Assume that $V$ is a direct sum of simple $\mathfrak{h}$-modules, 
$$V=V_1^1+\cdots+ V_{n_1}^1+ V_1^2+ \cdots + V_{n_2}^2+ \cdots + V_1^k+ \cdots + V_{n_k}^k,$$
where $V_i^j$ is isomorphic to $ V_{i'}^j$ for all $i,i',j$ and  nonisomorphic to $V_l^k$ for all $l$ and $k\neq j$. 
Then every simple submodule $U$ of $V$ is contained in  $V_1^j+\cdots+ V_{n_j}^j$ for some $j$, 
and there exist $\lambda_1,\ldots,\lambda_ {n_j}\in F$, not all zero, such that $U$ consists of all elements of the form 
$$\lambda_1v+\lambda_2 \phi_2(v)+ \ldots + \lambda_ {n_j}\phi_{n_j}(v),\,\, v\in V_1^j,$$ where   $\phi_l:V_1^j\to V_l^j$ is an $\mathfrak{h}$-module isomorphism.
We will use this fact frequently and without reference.

Our goal is to give a list of all Lie subalgebras of $\gl(n^2)$ that contain $\g$. In the first and major step we describe those of them that are contained in $\sll(n^2)$.

\begin{proposition}\label{list}
If $\nl$ is a proper Lie subalgebra of $\sll(n^2)$ that contains $\g$, then  $\nl$ is one of the following Lie algebras:
\begin{enumerate}
\item[(i)]
$\sll(n^2-1),\sll(n^2-1)+\p,\sll(n^2-1)+\q,\sll(n^2-1)+V_8, \sll(n^2-1)+\p+V_8,\sll(n^2-1)+\q+V_8,$ 
\item[(ii)]
$\so(n^2-1),\so(n^2-1)+\p,\so(n^2-1)+\q,\so(n^2-1)+V_8, \so(n^2-1)+\p+V_8,\so(n^2-1)+\q+V_8,$ 
\item[(iii)]
$\g,\g+\p,\g+\q,\g+V_8, \g+\p+V_8,\g+\q+V_8,$
\item[(iv)] 
$t\so(n^2)t^{-1}$ for some $t\in T$,
\item[(v)] 
$\g+W(\lambda)$  for some $\lambda\in F$, $\lambda\ne -\frac{2}{n}$.
\end{enumerate}

\end{proposition}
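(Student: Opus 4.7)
The plan is to view $\nl$ as a $\g$-module and decompose it into simple summands using items (1) and (2). Since $V_1, V_2, V_5, V_6, V_8$ each appear with multiplicity one in $\sll(n^2)$ and are pairwise nonisomorphic to each other and to $\g$, each of them is either contained in $\nl$ or meets it trivially. The only multiplicity occurs in the adjoint-type isotypic block, spanned by $\g, V_3, V_4, V_7, \p, \q$ (with $V_4'$ already lying in $V_4 + \p + \q$). The argument then splits into cases according to which of the ``exotic'' simple modules $V_1, V_2, V_5, V_6$ occur in $\nl$.

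If $V_5 \subset \nl$ or $V_6 \subset \nl$, item (6) gives $\sll(n^2-1) \subseteq \nl$, and items (7), (8), (9) determine what may be adjoined from $\p, \q, V_8$ without forcing $\nl = \sll(n^2)$; this yields the six algebras in (i). If $V_1$ or $V_2$ sits in $\nl$ but $V_5, V_6$ do not, then (10) gives $\so(n^2-1) \subseteq \nl$, and (5) forces $\nl \cap \sll(n^2-1) = \so(n^2-1)$. Viewing $\nl$ as an $\so(n^2-1)$-module and invoking (11), any further simple summand lies in $\p + \q + V_8$, and the simple $\g$-submodules of $\p + \q$ are classified by (12) as $\p$, $\q$, or a conjugate $tV_3t^{-1}$ with $t \in T$. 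The options $\p$ and $\q$ (with or without $V_8$) produce the six algebras of (ii), whereas a nontrivial conjugate $tV_3t^{-1}$ combines with $\so(n^2-1)$ into $t \so(n^2) t^{-1}$, case (iv).

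The remaining case is when none of $V_1, V_2, V_5, V_6$ lies in $\nl$. Here a direct bracket computation (using (4) and that $\g + V_3 \subset \so(n^2)$) shows that having $V_3 \subset \nl$ or $V_7 \subset \nl$ forces $V_1$ or $V_2$ into $\nl$, contradicting the standing assumption, so $\nl \subseteq \g + V_4 + \p + \q + V_8$. Write $\nl = \g + U + \epsilon V_8$ with $\epsilon \in \{0,1\}$ and $U$ a $\g$-submodule of $V_4 + \p + \q$. Item (13) rules out $U \subseteq \p + \q$ unless $U \in \{0, \p, \q\}$ and forces $U$ to be a single simple summand. The options $U \in \{0, \p, \q\}$ yield the six algebras of (iii). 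In the remaining situation $U$ is a simple $\g$-submodule of $V_4 + \p + \q$ genuinely involving $V_4$. Parametrising $U = \{\lambda_1 \phi_1(v) + \lambda_2 \phi_2(v) + \lambda_3 \phi_3(v) : v \in \g\}$ via the three explicit isomorphisms displayed at the end of the previous subsection, the closure condition $[U, U] \subseteq \g + U$ becomes a system of polynomial relations in $\lambda_1, \lambda_2, \lambda_3$, and these cut out (after normalising $\lambda_3 = 1$ and excluding the degenerate value at which $U$ degenerates to $V_4' \subset \p + \q$) precisely the one-parameter family $W(\lambda)$ of case (v).

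The main obstacle is the last bracket computation: showing that the closure condition on a generic simple submodule of $V_4 + \p + \q$ carves out exactly the one-parameter family $W(\lambda)$ with the explicit formula given in the notation subsection, and identifying the excluded value $\lambda = -\frac{2}{n}$ as the parameter producing $V_4'$. A secondary technical point is the bracket argument eliminating pure $V_3$ and $V_7$ submodules in Step 3. The rest of the argument is essentially bookkeeping with items (1)--(13).
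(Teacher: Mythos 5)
Your overall strategy coincides with the paper's: decompose $\nl$ as a $\g$-module, split into cases according to which of the multiplicity-one summands occur, reduce the interesting cases to submodules of the adjoint isotypic block $\g+V_4+\p+\q$, and finish with the explicit bracket computation that produces the relation $n\lambda\mu+2\lambda+2\mu=0$ and hence the family $W(\lambda)$. That final computation, which you correctly single out as the main obstacle, is exactly what the paper does (it also needs the rank argument showing that nonzero elements of $\nl$ have rank at least $n-2$, so that the rank-$\le 2$ operators $x\mapsto \tr(xb)a-\tr(xa)b$ cannot lie in $\nl$; this is where $n\ge 5$ enters).

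There is, however, a genuine gap: you never exclude adjoining $V_8$ to the algebras of cases (iv) and (v). In your third case you write $\nl=\g+U+\epsilon V_8$, but when $U$ is a diagonal submodule genuinely involving $V_4$ the parameter $\epsilon$ silently disappears, and in your second case $V_8$ is not discussed alongside the conjugates $tV_3t^{-1}$. As a result your argument does not rule out $\g+W(\lambda)+V_8$ or $t\so(n^2)t^{-1}+V_8$, which are absent from the list and are indeed not Lie algebras; but this requires proof, since ${\rm ad}(V_8)$ moves a diagonal submodule of $\p+\q+V_4$ off itself. The paper disposes of this uniformly before reaching (iv) and (v): if $V_8\subset\nl$, then the component of $\nl$ in $V_4'+\p+\q$ is a $(\g+V_8)$-submodule, hence by item (8) a sum of some of $V_4',\p,\q$; the cases $\p,\q$ land in lists (ii)--(iii), while $V_4'\subseteq\nl$ forces $\sll(n^2-1)\subseteq\nl$ by item (5), a contradiction. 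So $V_8\not\subset\nl$ in the remaining cases. Two smaller points: your parenthetical claim that the degenerate value gives ``$V_4'\subset\p+\q$'' is false --- $V_4'$ has a nonzero $V_4$-component, and at $\lambda=-\frac2n$ the relation $\mu(n\lambda+2)=-2\lambda$ simply has no solution, which is why that value is excluded; and in case (i) it is item (12) (not (8)) that rules out adjoining a diagonal submodule of $\p+\q$ to $\sll(n^2-1)$. Finally, a complete proof must also verify that the listed sums really are Lie algebras, which you do not address.
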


\begin{proof}
Consider $\nl$  as a $\g$-module. Since $\g$ is simple, $\nl$ is a  direct sum of simple modules by Weyl's theorem. 
As $\nl$ is a $\g$-submodule of  $\sll(n^2)=\g+\sum_{i=1}^8 V_i$ (see \eqref{dec}), 
it follows from  (\ref{iso}) that $\nl$ is equal to a sum of $\g$, some of the modules $V_1,V_2,V_5,V_6,V_8$, and 
a submodule of $V_3+V_4+V_7$. 
We have to figure out when such a sum forms a Lie algebra  and can be therefore equal to $\nl$.
The proof is divided into several steps.
First we consider the case where $\nl$  contains $\sll(n^2-1)$, which leads to the list (i).  
The case where $\sll(n^2-1)\not\subseteq\nl$ is more complex. After finding the Lie algebras  from  (ii) and (iii) we consider separately the situation  where $\nl$ contains   
 $\so(n^2-1)$
and the situation where it does not. This yields (iv) and (v), respectively.

So first suppose that $\nl$ contains $\sll(n^2-1)$. From (\ref{dec2}) we see that $\sll(n^2)=\sll(n^2-1)+\p+\q+V_8$. Therefore
$\nl=\sll(n^2-1)+Z$, where $Z$ is a $\g$-module contained in $\p+\q+V_8$. Since $\p$ and $\q$ are isomorphic \eqref{iso}, the
 only possibilities for $Z$ are $$0,\p,\q,V_8,\p+V_8,\q+V_8,\p+\q,W,W+V_8,$$
  where $W$ is a proper submodule of $\p+\q$ different from $0,\p,\q$.
We have to find out for which of these nine choices $\sll(n^2-1)+Z$ is a Lie algebra. 
 It is easy to check that this is true for the first six ones.  On the other hand, the last three choices must be ruled out in view 
 of  (\ref{p+q}), (\ref{w}), and (\ref{v8'}), respectively. Thus, (i) lists all  proper Lie subalgebras of $\sll(n^2)$  that contain  $\sll(n^2-1)$.

%\begin{itemize}\label{A}
%\item $\sll(n^2-1)$, \item $\sll(n^2-1)+\p$, $\sll(n^2-1)+\q$, $\sll(n^2-1)+V_8$,
%\item $\sll(n^2-1)+\p+V_8$, $\sll(n^2-1)+\q+V_8$. \end{itemize}

From now on we assume that $\nl$ does not
 contain $\sll(n^2-1)$.
 %Let us also assume that $\nl$ is not one of the Lie algebras listed in (ii) and (iii). Here we have to point out that they all  indeed are
 %Lie algebras, as can be easily checked.
 %Before proceeding to the separate cases, regarding the containment of $\so(n^2-1)$, 
 %we make some general observations.
%First we increase the list with Lie algebras \begin{itemize} \item
%$\so(n^2-1)$,\item
%$\so(n^2-1)+\p$, $\so(n^2-1)+\q$, $\so(n^2-1)+V_8$, \item
%$\so(n^2-1)+\p+V_8$, $\so(n^2-1)+\q+V_8,$ \item
%$\g$,
%\item $\g+\p$, $\g+\q$, $\g+V_8$,  \item $\g+\p+V_8$, $\g+\q+V_8$.
%\end{itemize}
Note that $V_5$ and $V_6$ are not contained in $\nl$ due to (\ref{v5,v6}). As $$\sll(n^2)=\so(n^2-1)+V_4'+V_5+V_6+\p+\q+V_8$$
 by (\ref{dec2}), it follows that 
$\nl\subset \so(n^2-1)+V_4'+\p+\q+V_8$  (the strict inclusion holds because of  (\ref{p+q})).
%Zaradi \ref{p+q}. 
%Tega nismo posebej poudarjali, ker smo ?e v prej?njem primeru, $sl(n^2-1)\subseteq l$, uporabili \ref{p+q} za izkljucitev sedme moznosti. 
%Morda bi lahko dodali referenco \ref{p+q}, a na tem mestu je mogoce tvegano, da se bralec ne bo zavedal, da lo?imo med znakoma \subset in \subseteq, in
Moreover, (\ref{v1,v2}) and $\so(n^2-1)=\g+V_1+V_2$ (see (\ref{dec2})) imply that 
$$\nl=\g+W\quad\mbox{ or}\quad \nl=\so(n^2-1)+W,$$ where $W$ is a submodule of $V_4'+\p+\q+V_8$. 
It is easy to check that  $0$, $\p$, $\q$, $V_8$, $\p+V_8$, $\q+V_8$ are appropriate choices for  $W$; 
that is, $\so(n^2-1)+W$ and $\g+W$ are indeed Lie algebras in each of these cases. 
From now on we assume that $W$ is different from these modules. In other words, we are assuming that $\nl$ is none of the Lie algebras listed in (ii) and (iii).

Assume that $V_8\subset \nl$.  Then we have $W=V_8+W'$, where $W'$ is a submodule of $V_4'+p+q$.  According to (\ref{v8}) (and (\ref{p+q})), $W'$ can be equal to one of 
$\p,\q,V_4'$ or to a sum of two of them.
If $W'\in \{\p,\q\}$, then $\nl$ can be found in one  of the lists (ii) or (iii). On account of (\ref{p+q}), the only additional examples can be obtained if $V_4'\subseteq W'$. Thus,
 either  $\so(n^2-1) + V_4'\subset \nl$ or  $\g + V_4'\subset \nl$. However, from (\ref{max}) (and \eqref{dec2}) it follows that in each of these two cases $\nl$ contains  
$\sll(n^2-1)$,  contrary to our assumption.
%Thus, if $V_8\subset \nl$, then we do not obtain new examples of Lie algebras. 
Therefore  $V_8\not\subset \nl$.

Assume now that $\nl$ contains $\so(n^2-1)$, so that $\nl=\so(n^2-1)+W$. Since the Lie algebras from (ii) and (iii) have already been excluded, we see from 
 (\ref{iso2}) that  $W$ is a  submodule of $\p+\q$ different from $0,\p,\q$ and isomorphic to $\p\cong \q$. 
 By (\ref{w}), $W=tV_3 t^{-1}$ for some $t\in T$. Since
 $\so(n^2)=\so(n^2-1)+V_3$ (see \eqref{dec2}), it follows that $\nl = t\so(n^2)t^{-1}$.
 On the other hand, $t\so(n^2)t^{-1}$ indeed is a Lie algebra for every $t\in T$. We have thus arrived at the case (iv).
 
%\begin{itemize}
%\item $so(n^2-1)+w$; $w=\{x+\lambda \phi(x)|x\in p\}$ 
%\end{itemize}
%for some $\lambda\in F^{*}$, $\phi:p\to q$ isomorphism of $g$-modules.

Finally  we consider the case where $\nl=\g+W$ with $W\subset V_4'+\p+\q$. As mentioned above, this is equivalent to  $W\subset V_4+\p+\q$.
%since $V_4\not\subset \p+\q$ is (simple) $\g$-module, that is contained in $V_4'+\p+\q$. 
%Thus, $V_4+\p+\q=V_4'+\p+\q$ and $w\subset V_4+\p+\q$.  
In view of 
(\ref{3}) and (\ref{p+q}) we may assume that $W$ is a simple module and  $W\not\subset \p+\q$. Accordingly, there exist $\lambda,\mu\in F$ such that $W$ consists of all maps of the form
$$
w_a: x\mapsto ax + xa +\lambda\tr(x)a+\mu\tr(xa)1,\,\,\,\mbox{where $a\in M_n^0$.}
$$
The fact that $\nl$ is a Lie algebra gives rise to some restrictions on $\lambda$ and $\mu$. Indeed, we have
$$[w_a,w_b](x)=[[a,b],x] +(n\lambda\mu+2\lambda+2\mu)\bigl(\tr(xb)a-\tr(xa)b\bigr)\in \nl = \g + W$$ 
Since $x\mapsto [[a,b],x]$ lies in $\g\subset \nl$, it follows that either $n\lambda\mu+2\lambda+2\mu=0$ or $x\mapsto \tr(xb)a-\tr(xa)b$ lies in $\nl$ for all $a,b\in M_n^0$. The rank of this operator is at most $2$. On the other hand, operators in $\nl$ are of the form
$$
h:x\mapsto [c,x]+ dx+xd + \lambda\tr(x)d+\mu\tr(xd)1 = ex + xf + \lambda\tr(x)d+\mu\tr(xd)1,
$$ 
where $e = d+c$ and $f=d-c$. An elementary argument (see, e.g., \cite{Kuc}) shows that the  
 rank of $x\mapsto ex+xf$ is at least $n$, unless $e=-f$ is a scalar matrix (and hence $h=0$).
Accordingly, nonzero elements in $\nl$ have rank at least $n-2$. Since $n\ge 5$, $\nl$ cannot contain operators $x\mapsto \tr(xb)a-\tr(xa)b$. Therefore $n\lambda\mu+2\lambda+2\mu=0$, showing that
 $\lambda\ne -\frac{2}{n}$ and $W=W(\lambda)$. That is, $\nl$ is of the form described in (v).
\end{proof}

The main result of this section now follows easily.

\begin{theorem}\label{prop2}
If $\h$ is a proper Lie subalgebra of $\mf{gl}(n^2)$ that contains $\g$,  then either $\h=\sll(n^2)$ or $\h=\nl +F t$, where  $\nl$ is a Lie algebra from Proposition \ref{list} and $t\in T_0$. Moreover, if $\nl$ is a Lie algebra from {\rm (iv)} or {\rm (v)}, then $t=0$ or $t=1$.
\end{theorem}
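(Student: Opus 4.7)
First I set $\nl := \h \cap \sll(n^2)$, a Lie subalgebra of $\sll(n^2)$ containing $\g$. The trace gives an injection $\h/\nl \hookrightarrow \gl(n^2)/\sll(n^2) \cong F$, so $\dim_F(\h/\nl) \le 1$. If $\nl = \sll(n^2)$, properness of $\h$ forces $\h = \sll(n^2)$ (adding any trace-nonzero element would enlarge $\h$ to all of $\gl(n^2)$). Otherwise $\nl$ is one of the algebras listed in Proposition \ref{list}; if $\h = \nl$ we take $t = 0$, otherwise we pick $s \in \h \setminus \nl$.

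For such an $s$, the map $\delta : \g \to \nl$, $x \mapsto [s, x]$, is a Lie-algebra derivation into the $\g$-module $\nl$ (the derivation identity is Jacobi, and $[s, x] \in \h \cap \sll(n^2) = \nl$). Whitehead's first lemma applied to the semisimple $\g$ and the finite-dimensional $\g$-module $\nl$ yields $y \in \nl$ with $[s, x] = [y, x]$ for all $x \in \g$, so $s - y$ centralises $\g$. The decomposition $M_n = M_n^0 \oplus F\cdot 1$ writes $M_n$ as the sum of two non-isomorphic simple $\g$-modules (the adjoint of $\sll(n)$ and the trivial one), so by Schur's lemma the centraliser of $\g$ in $\gl(n^2) = \mathrm{End}(M_n)$ equals $T_0$. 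Hence $s - y = t \in T_0$ and $\h = \nl + Ft$.

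For the ``moreover'' clause we have $[t, \nl] \subseteq \h \cap \sll(n^2) = \nl$, so $t$ normalises $\nl$. In case (iv), $\nl = t_0 \so(n^2) t_0^{-1}$ with $t_0 \in T \subset T_0$; since $T_0$ is abelian, $t_0^{-1} t t_0 = t$, and the problem reduces to showing any $t \in T_0$ normalising $\so(n^2)$ is a scalar. By (\ref{ort}), $\so(n^2)$ is the orthogonal Lie algebra of the form $\tr(xy)$, whose normaliser in $\gl(n^2)$ consists of operators $u$ with $\tr(u(x)y) + \tr(xu(y)) = c\,\tr(xy)$ for some $c \in F$; substituting $t(x_0 + x_1 \cdot 1) = \alpha x_0 + \beta x_1 \cdot 1$ gives $2\alpha = c = 2\beta$, forcing $\alpha = \beta$.

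In case (v), $\nl = \g + W(\lambda)$. Using $t(z) = \alpha z + \frac{\beta - \alpha}{n}\tr(z) \cdot 1$ and $w_a(1) = (n\lambda+2)a$, a direct computation yields
$$[t, w_a](x) = \frac{\alpha - \beta}{n}\Bigl[(n\lambda+2)\tr(x)\,a - \frac{4}{n\lambda+2}\,\tr(xa)\cdot 1\Bigr],$$
which lies in $\p + \q$. Any element of $\nl$ written as $\mathrm{ad}(c) + \gamma w_b$ equals $(c+\gamma b)x + x(-c+\gamma b) + \gamma\lambda\tr(x)b - \frac{2\gamma\lambda}{n\lambda+2}\tr(xb)\cdot 1$; for the ``linear in $x$'' part to vanish we need $c = 0$ and $\gamma b = 0$, which then forces the whole expression to be zero. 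Hence the right-hand side above lies in $\nl$ only when $\alpha = \beta$. In both cases $t \in F\cdot 1$, so after rescaling $t = 0$ or $t = 1$. The main obstacle is the explicit case-(v) calculation of $[t, w_a]$, which requires carefully tracking the $F\cdot 1$-component throughout; case (iv) is streamlined by the conformal-normaliser interpretation.
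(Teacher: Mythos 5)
Your proof is correct, and its skeleton is the same as the paper's: both hinge on the fact that the codimension-one complement of $\nl=\h\cap\sll(n^2)$ in $\h$ must sit inside the two-dimensional centraliser $T_0$ of $\g$. The paper gets there by Weyl complete reducibility (splitting off a one-dimensional, hence trivial, $\g$-submodule $U$, which must lie in $V_8+F1=T_0$), whereas you get there by Whitehead's first lemma (correcting $s$ by an inner part $y\in\nl$ so that $s-y$ centralises $\g$) plus Schur's lemma applied to $M_n=M_n^0\oplus F1$; these are two standard implementations of the same idea and buy nothing different from each other. Where you genuinely add value is the ``moreover'' clause, which the paper dismisses as ``a brief examination'': your conformal-normaliser argument for case (iv) is clean and correct (conjugation by $t_0$ is harmless since $T_0$ is commutative, and the normaliser of the simple $\so(n^2)$ acting irreducibly is $\so(n^2)+F\,\mathrm{id}$), and I checked your formula for $[t,w_a]$ in case (v) --- it is exactly right, using $\tr(w_a(x))=\frac{4}{n\lambda+2}\tr(xa)$ and $w_a(1)=(n\lambda+2)a$. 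The one step you should make explicit in case (v) is \emph{why} the ``linear in $x$'' part $x\mapsto ex+xf$ of an element of $\nl$ lying in $\p+\q$ must vanish: this is not automatic from matching components but follows because $x\mapsto ex+xf$ has rank at least $n$ unless it is zero (the Ku\v cera lemma already invoked in the proof of Proposition \ref{list}), while elements of $\p+\q$ have rank at most $2<n$. With that citation supplied, the argument is complete.
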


\begin{proof}
In view of Proposition \ref{list} 
we may assume that $\h$ is not contained 
in $\sll(n^2)$.  Thus,  
$\nl=\h\cap \sll(n^2)$ is a proper Lie subalgebra of $\h$. Since $\sll(n^2)$ has codimension 1 in $\gl(n^2)$, $\nl$  has codimension 1 in $\h$.
Also, $\nl$ is a 
 $\g$-module, and so $\h=\nl + U$ for some 1-dimensional $\g$-submodule $U$ of $\mf{gl}(n^2)$. 
In the decomposition (\ref{dec}) there is only one $1$-dimensional module, namely $V_8$, so $U$ is  a submodule of $V_8+F1$. Therefore
$U = Ft$, where 
$t$ is a matrix of the form
$ \left( \begin{array}{cc} \alpha 1_{n^2-1}& 0 \\
 0 & \beta \end{array} \right)\in T_0$ for some $\alpha,\beta\in F$. It is easy to see that for any choice of $\alpha$ and $\beta$, $\h=\nl + Ft$ is a Lie 
 algebra if $\nl$ is listed in (i), (ii), or (iii). On the other hand, a brief examination shows that $\alpha$ must be equal to $\beta$ if $\nl$ is 
 listed in (iv) or (v).
  \end{proof}

\subsection{Applications}  Theorem \ref{prop2} makes it possible for us to find various conditions that are characteristic for derivations. Here is a sample result:

\begin{corollary}
If $d:M_n\to M_n$ is a linear map such that $d(1)=0$ and 
$$\tr\Bigl(d(x)yz+xd(y)z+xyd(z)\Bigr)=0 \quad\mbox{for all $x,y,z\in M_n$},$$
then $d$ is a derivation. 
\end{corollary}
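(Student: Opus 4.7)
Let $\mathcal L \subseteq \gl(n^2)$ denote the set of all linear maps $d : M_n \to M_n$ with $d(1)=0$ and $\tr(d(x)yz+xd(y)z+xyd(z))=0$ for all $x,y,z \in M_n$. The plan is to show that $\mathcal L$ is a Lie subalgebra of $\gl(n^2)$ containing $\g$ and contained in $\so(n^2-1)$, and then to invoke Proposition \ref{list} to force $\mathcal L = \g$; since every derivation of $M_n$ is inner, this yields the corollary.

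First I would check that $\g \subseteq \mathcal L$: if $d=[a,\cdot]$ is inner, then $d(x)yz+xd(y)z+xyd(z)=d(xyz)=[a,xyz]$, whose trace vanishes. Next, to see that $\mathcal L$ is closed under brackets, abbreviate the trace expression by $\Phi_d(x,y,z)$. For $d_1,d_2 \in \mathcal L$, the three identities $\Phi_{d_1}(d_2(x),y,z)=\Phi_{d_1}(x,d_2(y),z)=\Phi_{d_1}(x,y,d_2(z))=0$, when summed, express $\Phi_{d_1 d_2}(x,y,z)$ as the negative of an expression symmetric in $(d_1,d_2)$; hence $\Phi_{[d_1,d_2]}\equiv 0$, and combined with $[d_1,d_2](1)=0$ this gives $[d_1,d_2]\in \mathcal L$.

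Next I would confine $\mathcal L$ inside $\so(n^2-1)$. Setting $z=1$ in the trace condition and using $d(1)=0$ yields $\tr(d(x)y+xd(y))=0$, which is precisely the defining condition of $\so(n^2)$; setting in addition $y=1$ gives $\tr(d(x))=0$, so $d$ preserves the decomposition $M_n=M_n^0+F1$, and therefore $\mathcal L \subseteq \so(n^2-1)$. Consequently $\mathcal L$ is a Lie subalgebra of $\sll(n^2-1)$ that contains $\g$, so item (\ref{max}) of the extracts from \cite{PD}, together with the containment $\mathcal L \subseteq \so(n^2-1)$, leaves only the two possibilities $\mathcal L=\g$ or $\mathcal L=\so(n^2-1)$.

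The main obstacle is to rule out the second possibility, which I would do by writing down a single element of $\so(n^2-1)$ that violates the three-variable trace condition. The operator $d_0(x) := e_{1n} x e_{2n} - e_{2n} x e_{1n}$ is skew-symmetric with $d_0(1)=0$, so $d_0\in \so(n^2-1)$; a direct computation gives $d_0(e_{n1}) = -e_{2n}$ and $d_0(e_{12}) = 0$, and substituting $x=y=e_{n1}$, $z=e_{12}$ yields $\Phi_{d_0}(e_{n1},e_{n1},e_{12}) = \tr(-e_{22}) = -1 \ne 0$. Hence $\mathcal L = \g$, and $d$ is a derivation.
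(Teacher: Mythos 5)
Your proof is correct and follows essentially the same route as the paper's: check that the set of all such maps is a Lie subalgebra of $\gl(n^2)$ containing $\g$, use the substitutions $z=1$ and $y=z=1$ to confine it to $\so(n^2-1)$, reduce to the two candidates $\g$ and $\so(n^2-1)$ via the Platonov--\DJ okovi\'c machinery, and eliminate $\so(n^2-1)$ by exhibiting an explicit element violating the three-variable trace identity. The only immaterial differences are that you invoke the maximality fact (\ref{max}) directly rather than the full classification of Theorem \ref{prop2}, and that you use the element $e_{1n}\otimes e_{2n}-e_{2n}\otimes e_{1n}$ with $x=y=e_{n1}$, $z=e_{12}$ where the paper uses $e_{11}\otimes e_{22}-e_{22}\otimes e_{11}$ with $x=e_{12}$, $y=e_{23}$, $z=e_{31}$; both computations check out.
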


\begin{proof}
It is easy to verify that the set $\h$ of all maps $d$ that satisfy the conditions of the corollary is a Lie algebra 
that contains $\g$. Therefore $\h$ can be found among Lie algebras listed in Theorem \ref{prop2}. If we put $z=1$ in the above identity
we get $\h\subseteq \so(n^2)$. Further, setting $y=z=1$ we see that  $\h$ preserves the decomposition $M_n= M_n^0 + F1$. The only possible choices for $\h$ 
are therefore $\g$ and 
$\so(n^2-1)$. It remains to find an element of $\so(n^2-1)$ that does not satisfy the condition of the corollary. An example is
$e_{11}\tnz e_{22}-e_{22}\tnz e_{11}$ together with $x=e_{12}, y=e_{23}, z=e_{31}$.
\end{proof}

Kernels of derivations are obviously associative subalgebras. This property is characteristic for derivations in the following sense:

 \begin{corollary}\label{cor2}
If $\nl$ is a Lie subalgebra of $\mf{gl}(n^2)$ that contains $\g$ and has the property that  $\ker (d)$ is an associative subalgebra of 
$M_n$ for every $d\in \nl$, then $\nl=\g$.
\end{corollary}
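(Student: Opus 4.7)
The plan is to invoke Theorem~\ref{prop2} and then, for each possibility $\nl \supsetneq \g$ on the resulting list, exhibit an explicit $d \in \nl$ whose kernel is not closed under multiplication. By that theorem, $\nl$ either equals $\sll(n^2)$ or has the shape $\nl_0 + Ft$ with $\nl_0$ from Proposition~\ref{list} and $t \in T_0$; since ``contains a bad element'' is monotone in inclusion, it suffices to handle the minimal cases in which $\nl$ contains at least one of the simple modules $V_1$, $V_2$, $\p$, $\q$, $V_8$, $W(\lambda)$, or a nonzero $t \in T_0$. A quick pass through the list in Theorem~\ref{prop2} shows that every $\nl$ except $\g$ itself falls into one of these categories.

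The five modules $\p$, $\q$, $V_1$, $V_2$, $W(\lambda)$ each carry a natural bad element. For $\p$: the map $x \mapsto \tr(x)(e_{11}-e_{22})$ has kernel $M_n^0$, which contains $e_{12}, e_{21}$ but not $e_{12}e_{21}=e_{11}$. For $\q$: the map $x \mapsto \tr(x(e_{11}-e_{22})) \cdot 1$ has kernel $\{x : x_{11} = x_{22}\}$, again missing $e_{11}$. For $V_1$: the highest weight vector acts as $x \mapsto x_{n2}e_{1n} - x_{n1}e_{2n}$, whose kernel $\{x : x_{n1} = x_{n2} = 0\}$ contains $e_{n3}$ and $e_{31}$ (here $n \ge 5$ ensures $3 \notin \{1,n\}$) but not their product $e_{n1}$; the case $V_2$ is analogous. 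For $W(\lambda)$: taking $a = e_{11} - e_{22}$, both $e_{12}$ and $e_{21}$ are killed, while the identity $d(e_{11}) = (2+\lambda)e_{11} - \lambda e_{22} - \frac{2\lambda}{n\lambda+2}\cdot 1$ shows that $e_{11}$ is not, for every admissible $\lambda$.

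The main obstacle is the module $V_8$, whose nonzero elements $f : x \mapsto x - n\tr(x)\cdot 1$ have trivial kernel and thus are not individually bad. The bad operator appears only after coupling $f$ with an inner derivation: I take $a = e_{11} - e_{nn} \in M_n^0$ and set $d = [a, \cdot] + f \in \g + V_8$. Since $f$ acts as the identity on $M_n^0$ and as the nonzero scalar $1 - n^2$ on $F1$, the kernel of $d$ is precisely the $(-1)$-eigenspace of $\mathrm{ad}(a)$ in $M_n^0$. For this choice of $a$ that eigenspace is spanned by $\{e_{i1} : 2 \le i \le n-1\} \cup \{e_{nj} : 2 \le j \le n-1\}$, and it contains $e_{n2}$ and $e_{21}$ whose product $e_{n1}$ lies in the $(-2)$-eigenspace of $\mathrm{ad}(a)$, hence outside the kernel. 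The remaining case $\nl = \g + Ft$ with $t = \mathrm{diag}(\alpha 1_{n^2-1}, \beta) \in T_0\setminus\{0\}$ is dispatched in the same spirit: if $\alpha = 0$, then $\ker t = M_n^0$ is not a subalgebra; otherwise, the same eigenspace calculation applied to $[a, \cdot] + \gamma t$ with $\gamma = 1/\alpha$ produces a kernel containing $e_{n2}, e_{21}$ but not $e_{n1}$.
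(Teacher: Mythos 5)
Your proposal is correct and follows essentially the same strategy as the paper: reduce via Theorem~\ref{prop2} to a short list of minimal pieces that any $\nl\supsetneq\g$ must contain, and exhibit in each an explicit operator whose kernel is not closed under multiplication. The only differences are cosmetic --- you split $\so(n^2-1)$ into the simple modules $V_1,V_2$ and treat $V_8$ separately rather than folding it into $\g+Ft$, and your concrete witnesses (e.g.\ $w_{e_{11}-e_{22}}$ for $W(\lambda)$ and the eigenspace computation for $\g+V_8$) differ from but are equivalent in spirit to the paper's.
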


\begin{proof}
We have to check that  each of the Lie algebras listed in Theorem 
\ref{prop2},  except $\g$, contains an operator whose kernel is not a subalgebra.  
 It is enough to show that $\so(n^2-1)$, $\p$, $\q$, $W(\lambda)$ and $\g+Ft$ with $t\in T_0\setminus \{0\}$ contain such operators. Namely, 
all  Lie algebras from the list, except $\g$, contain at least one of these sets.

\begin{itemize}
\item
In $\so(n^2-1)$ consider $r=e_{12}\otimes e_{34}-e_{34}\otimes e_{12}$ (cf. \eqref{ort}). Then $e_{21},e_{13}\in \ker(r)$, while 
$e_{21}e_{13}=e_{23}\not\in \ker(r)$.
\item
The kernel of every nonzero element of $\p$ is equal to $M_n^0$, which is not a subalgebra of $M_n$ .
\item
There is a variety of examples in $\q$, say $\sum_{i=1}^n e_{i1}\tnz e_{2i}$.
\item
Observe that $x=e_{11}-e_{22}+\sqrt{-1}(e_{33}-e_{44})$ lies in the kernel of $w_{e_{12}}\in W(\lambda)$. However, this does not hold for $x^2$.
\item
Maps in $\g+Ft$ are of the form  $r_a:x\mapsto[x,a]+\alpha x+\beta \tr(x)1$, where $a\in M_n$, $\alpha,\beta\in F$, $\alpha\ne 0$ or $\beta\ne 0$.
If $\alpha\neq 0$, take $a=\alpha(e_{11}-e_{33})$. Then $e_{12},e_{23}$ are elements from $\ker(r_a)$, while this does not hold for their product 
$e_{13}$. If $\alpha=0$, choose $a=e_{12}$. Then
$e_{34},e_{43}$ lie in $\ker(r_a)$, but not their product $e_{33}$.
\end{itemize}
\end{proof}

Let $f=f(\xi_1,\ldots,\xi_l)$, $l\ge 2$, be a multilinear polynomial in noncommuting variables, i.e.,
$$
f = \sum_{\sigma\in S_l} \lambda_\sigma \xi_{\sigma(1)}\ldots \xi_{\sigma(l)} 
$$
where $\lambda_\sigma\in F$ and $S_l$ is a symmetric group. A linear map $d$ from an algebra $A$ into itself is said to be an 
 {\em $f$-derivation} if 
 \begin{equation}\label{f-odv} \tag{$\ast$}
d\bigl(f(x_1,\dots,x_l)\bigr)=\sum_{i=1}^{l}f(x_1,\dots,x_{i-1},d(x_i),x_{i+1},\dots,x_l) 
\end{equation}
for all $x_1,\ldots,x_l\in A$. Derivations are obvious examples, and the question is whether they are basically also the only possible examples.   An affirmative answer has been obtained for rather general algebras (see, e.g., \cite[Section 6.5]{FIbook}), but, surprisingly, the case where $A=M_n$ still has not been completely settled. Under certain technical restrictions we are now in a position to handle it.  To exclude some pathological cases we  assume that  $d(1)=0$. We also assume that $l < 2n$ since otherwise $f$ could be a polynomial identity of $M_n$, making \eqref{f-odv}  meaningless.
It should be mentioned that the arguments in the next proof are similar to those from the recent paper \cite{ABSV}, in which the Platonov-\DJ okovi\' c theory was applied.

\begin{corollary}
Let $f$ be a multilinear polynomial of degree $ l<2n$.
If $d:M_n\to M_n$ is an $f$-derivation such that $d(1)=0$, then  $d$ is a derivation.
\end{corollary}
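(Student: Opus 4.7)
My plan is to consider the set $\h := \{d \in \gl(n^2) : d \text{ is an } f\text{-derivation and } d(1) = 0\}$, realize it as a Lie subalgebra of $\gl(n^2)$ containing $\g$, apply Theorem \ref{prop2} to narrow the possibilities to a finite list, and rule out every candidate except $\g$.

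The inclusion $\g \subseteq \h$ is immediate (inner derivations are $f$-derivations), and a standard computation shows that $[d_1,d_2]$ is an $f$-derivation whenever $d_1,d_2$ are: applying \eqref{f-odv} twice produces cross terms $f(\ldots,d_1(x_i),\ldots,d_2(x_j),\ldots)$ with $i\neq j$ that are symmetric in $d_1,d_2$ and cancel in the commutator. Thus $\h$ is a Lie subalgebra, and it is proper because every element kills $1$, so Theorem \ref{prop2} gives $\h = \nl + Ft$ with $\nl$ from Proposition \ref{list} and $t \in T_0$. The requirement $d(1)=0$ restricts which simple $\g$-submodules can appear in $\h$: scanning the eleven simple modules in the table, the only ones whose nonzero elements kill $1$ are $\g, V_1, V_2, V_5, V_6, V_4'$ and $\q$, while in $T_0$ the only admissible line is $Fz$, where $z(x) := x - \frac{1}{n}\tr(x)\cdot 1$ is the projection onto $M_n^0$. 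Combined with item (\ref{max}) (any Lie algebra strictly between $\g$ and $\sll(n^2-1)$ equals $\so(n^2-1)$), these restrictions force $\nl \in \{\g,\,\g+\q,\,\so(n^2-1),\,\so(n^2-1)+\q,\,\sll(n^2-1),\,\sll(n^2-1)+\q\}$ and $Ft \in \{0, Fz\}$; so to conclude $\h = \g$ it suffices to prove $\q \not\subseteq \h$, $V_1 \not\subseteq \h$ and $z \notin \h$.

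Because being an $f$-derivation is a $\g$-equivariant condition, inside each simple $\g$-module the $f$-derivations form a submodule, hence are either $0$ or the whole module; exhibiting a single non-$f$-derivation in each case therefore suffices. For $d = z$, relation \eqref{f-odv} unfolds to the identity
$$n(l-1)\,f(x_1, \ldots, x_l) \;=\; \sum_{i=1}^{l} \tr(x_i)\, f(x_1, \ldots, 1, \ldots, x_l) \;-\; \tr\bigl(f(x_1, \ldots, x_l)\bigr) \cdot 1;$$
specializing to $x_i \in M_n^0$ collapses the trace terms and forces $f$ to vanish on $(M_n^0)^l$, which contradicts $f \neq 0$ once one evaluates on cyclic products of matrix units such as $x_i = e_{i,i+1}$ (indices cyclic), using $l<2n$ to ensure $f$ is not a polynomial identity of $M_n$. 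For a nonzero $q_a \in \q$ the condition becomes $\tr(f(x_1, \ldots, x_l) a)\cdot 1 = \sum_i \tr(x_i a)\, f(x_1, \ldots, 1, \ldots, x_l)$; choosing $a$ to be a matrix unit and the $x_i$ suitably makes the right-hand side a non-scalar matrix. For $V_1$ take the highest weight vector $d_0(x) = e_{1n} x e_{2n} - e_{2n} x e_{1n}$, which is supported only on $e_{n1}$ and $e_{n2}$; an explicit substitution (e.g.\ $x_1 = e_{n1}$, $x_2 = e_{n2}$, and remaining $x_i = 1$) makes the two sides of \eqref{f-odv} disagree.

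The main obstacle is this last step: producing the three explicit non-$f$-derivation computations for $z$, $q_a$, and $d_0$ in a uniform way. The hypothesis $l < 2n$ is essential throughout and enters via Amitsur--Levitzki's theorem, which forbids non-trivial multilinear polynomial identities of $M_n$ below degree $2n$; this is precisely what prevents the auxiliary trace-type identities forced by the false $f$-derivation properties from actually holding.
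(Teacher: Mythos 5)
Your reduction is sound and essentially matches the paper's: you realize the relevant maps as a Lie subalgebra $\h\supseteq\g$, invoke Theorem \ref{prop2}, and use $d(1)=0$ to prune the list. Your bookkeeping via the observation that ``$T(1)=0$'' cuts out a $\g$-submodule is a clean variant of the paper's remark that the possibilities $\p$ and $W(\lambda)$ die because of $d(1)=0$, and your conclusion that it suffices to exclude $\q$, $V_1$ (equivalently $\so(n^2-1)$) and the projection $z$ onto $M_n^0$ is correct. The problem is that the exclusions themselves --- which you rightly identify as ``the main obstacle'' --- are exactly where the paper does its real work, and your versions have genuine gaps.

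First, for $V_1$ the substitution $x_1=e_{n1}$, $x_2=e_{n2}$, remaining $x_i=1$, computes both sides of \eqref{f-odv} in terms of $f(x_1,x_2,1,\ldots,1)=\alpha x_1x_2+\beta x_2x_1$, and this polynomial can vanish identically (take $f$ to be an iterated commutator, so that plugging $1$ into any slot kills it); then both sides are $0$ and no contradiction results. The same degeneration threatens your $\q$ argument: the right-hand side $\sum_i\tr(x_ia)f(x_1,\ldots,1,\ldots,x_l)$ can be identically zero (e.g.\ $f=s_l$ with $l$ even), so ``choosing the $x_i$ suitably'' is not automatic. The paper avoids this by normalizing so that $x_1\cdots x_l$ occurs in $f$ and substituting a staircase of matrix units $(e_{11},e_{32},e_{22},e_{23},\dots)$ for which exactly one permutation contributes; any repair of your argument needs that device. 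Second, and more seriously, in the $z$ case you correctly derive that $f$ must vanish on $(M_n^0)^l$, but then appeal to $l<2n$ and Amitsur--Levitzki, which only says $f$ is not an identity of $M_n$; vanishing on all of $(M_n^0)^l$ is the weaker statement that $f$ is a \emph{weak} identity of the pair $(M_n,M_n^0)$, and these are not the same thing (for $n=2$ the degree-$3$ polynomial $[x_1x_3+x_3x_1,x_2]$ vanishes on trace-zero matrices without being an identity). The elementary staircase proof of non-vanishing in degree $<2n$ uses the diagonal idempotents $e_{ii}$, which do not lie in $M_n^0$, and your proposed cyclic substitution $x_i=e_{i,i+1}$ fails to isolate a single monomial as soon as $l\ge n$. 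The paper sidesteps this entirely with its maximal-subset argument: it chooses a maximal set $S$ of variables that can be set to $1$ while keeping the polynomial nonzero, which lets it work with evaluations on $M_n$ (where $1$'s are allowed) rather than on $M_n^0$, and then derives $(1-(l-s))f(\vec a)=\frac1n\tr(f(\vec a))1$, forcing $l=s$ and a contradiction from $f(a,\ldots,a)=\mu a^l$ being scalar for all $a\in M_n^0$. Until you either prove that no nonzero multilinear polynomial of degree $<2n$ vanishes on $(M_n^0)^l$ (nontrivial, and false without restrictions on $n$) or switch to an argument of the paper's type, the $z$ exclusion is not established.
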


\begin{proof}
Note that the set  $\h$  of all $f$-derivations of $M_n$ is a  Lie subalgebra of $\mf{gl}(n^2)$ that contains $\g$. 
Hence $\h$ is one of the Lie algebras listed in Theorem \ref{prop2}.  As in the proof of Corollary \ref{cor2} it suffices to show that 
 $\h$ does not contain  $\so(n^2-1)$, $\p$, $\q$, $W(\lambda)$ and $\g+Ft$ with $t\in T_0\setminus \{0\}$. 
The second and the fourth  possibility can be cancelled out due to the initial assumption  $d(1)=0$.  

We claim that $\so(n^2-1)\not\subseteq \h$. Without loss of generality we may assume that $x_1\ldots x_l$ is one of the monomials of $f$.
Choose $e_{13}\tnz e_{22}-e_{22}\tnz e_{13}\in \so(n^2-1)$ (cf. \eqref{ort}). If $l=2k-2$ (resp. $l=2k-1$), 
take $(x_1,\dots,x_n)=(e_{11},e_{32},e_{22},e_{23},\dots,e_{k-1,k})$ (resp. $(x_1,\dots,x_n)=(e_{11},e_{32},e_{22},e_{23},\dots,e_{k,k})$).
Then observe that in this case the left-hand side of (\ref{f-odv}) differs from its right-hand side. This proves our claim.

The task now is to exclude the case $\q\subset \h$. Note that $\sum_{i=1}^ne_{i1}\tnz e_{ki}\in \q$ with
$(x_1,\dots,x_n)=(e_{11},e_{12},e_{22},e_{23},\dots,e_{k-1,k})$ (resp. $(x_1,\dots,x_n)=(e_{11},e_{12},e_{22},e_{23},\dots,e_{k,k})$), 
depending on the parity of $l$, does the trick. 

We are reduced to proving that  $t\in T_0\setminus{\{0\}}$ cannot belong to $\h$.
We can restrict ourselves to the case where $t$ acts as a scalar multiple of the identity on $M_n^0$ and sends $1$ to $0$.
Now choose a maximal subset $S$ of $\mathbb{N}_l=\{1,\dots,l\}$ such that  the polynomial
$f(y_1, \ldots,y_l)$, where $y_i= 1$ if $i\in S$ and $y_i =x_i$ if $i\notin S$, is not zero (the case where $S=\emptyset$ is not excluded). Since the degree
of $f(y_1,\ldots,y_l)$ is less than $2n$, this polynomial is not an identity of $M_n$. Therefore there exist $a_1,\ldots,a_l\in M_n$ such that $a_i=1$ if $i\in S$
and $f(a_1,\ldots,a_l)\ne 0$. Moreover, because of the maximality assumption we may assume that $a_i\in M_n^0$ whenever $i\notin S$. Note that 
\eqref{f-odv} yields
$$f(a_1,\dots,a_l)-\frac{1}{n}\tr\bigl(f(a_1
,\dots,a_l)\bigr)1=(l-s)f(a_1,\dots,a_l),$$
where $s=|S|$.
This is possible only when $l-s=0$ or $l-s=1$. Actually, from the definition of $S$ it is clear that the last possibility cannot occur.
Therefore $l=s$. Considering $f(a,\ldots,a)$ for an arbitrary $a\in M_n^0 $ we easily derive a contradiction.
\end{proof}

Some of the Lie algebras from Theorem \ref{prop2} (and Proposition \ref{list}) are also closed under the associative product, 
and are therefore associative 
algebras. In the next corollary we will list all of them. Although the symbols such as $\gl$ etc. are traditionally reserved for Lie algebras, we will 
slightly abuse the notation and consider them as associative algebras. 

\begin{corollary}\label{gln}
If $A$ is a proper associative subalgebra of $\gl(n^2)$ that contains $\g$, then $A$ is either 
$$\gl(n^2-1),\,\gl(n^2-1)+\p,\,\gl(n^2-1)+\q,$$
$$\,\gl(n^2-1)+Ft, \,\,
\gl(n^2-1)+\p+Ft,\,\,\,\mbox{or}\,\,\,\gl(n^2-1)+\q+Ft$$
for some  $t\in T$.
\end{corollary}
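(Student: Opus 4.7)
The plan is to use Theorem \ref{prop2} together with the preliminary observation that the associative subalgebra of $\gl(n^2)$ generated by $\g$ is precisely $\gl(n^2-1)$; the desired list is then exactly the subset of Theorem \ref{prop2}'s list consisting of entries that contain $\gl(n^2-1)$.

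To justify the preliminary observation, note that every inner derivation annihilates $1$ and preserves $M_n^0$, so $\g\subseteq \gl(n^2-1)$. Viewed as operators on $M_n^0$, the elements of $\g$ form the adjoint representation of $\sll(n)$, which is irreducible. Burnside's theorem then gives that the associative algebra generated by $\g|_{M_n^0}$ is all of $\mathrm{End}(M_n^0)$, and since every inner derivation kills $F1$, lifting back identifies the associative envelope of $\g$ in $\gl(n^2)$ with $\gl(n^2-1)$. Hence any associative subalgebra $A$ containing $\g$ contains $\gl(n^2-1)$.

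Next, since $A$ is a Lie subalgebra of $\gl(n^2)$ containing $\g$, Theorem \ref{prop2} gives $A = \sll(n^2)$ or $A = \nl + Ft$ with $\nl$ from Proposition \ref{list} and $t\in T_0$. The case $A = \sll(n^2)$ is excluded because $\gl(n^2-1)$ contains the projection $v_1 := \mathrm{diag}(1_{n^2-1},0)$, whose trace is $n^2-1\neq 0$. For $\nl$ from items (ii)--(v) a direct dimension count gives $\dim(\nl+Ft) \le \dim \nl + 1 < (n^2-1)^2 = \dim \gl(n^2-1)$, ruling these out. For $\nl$ from (i), write $\nl = \sll(n^2-1) + X$ with $X$ a sum of some of $\p$, $\q$, $V_8$; the requirement $\gl(n^2-1) = \sll(n^2-1)+Fv_1 \subseteq A$ reduces to $v_1 \in X + Ft$. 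Using $T_0 \cap \p = T_0 \cap \q = 0$ while $V_8\subset T_0$, a brief case-by-case inspection matches the six choices of $X$ with the six algebras in the statement: if $V_8\not\subseteq X$ then $t$ must be a nonzero multiple of $v_1$, giving $A\in\{\gl(n^2-1),\,\gl(n^2-1)+\p,\,\gl(n^2-1)+\q\}$; if $V_8\subseteq X$ then any $t\in T$ works, giving the other three entries (which are independent of the particular $t\in T$ chosen, since $Fv_1+Ft=T_0$).

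The main obstacle is the first step --- identifying the associative envelope of $\g$ with $\gl(n^2-1)$. Once Burnside is applied to the irreducible adjoint representation of $\sll(n)$, the rest is methodical pruning of Theorem \ref{prop2}'s list by a dimension inequality and a short linear-algebra bookkeeping. As a reassurance one should also confirm directly that each listed algebra is multiplicatively closed: for $\p$ and $\q$ this is immediate from $\tr(a)=0$, and for $V_8 = Fv_2$ with $v_2 = \mathrm{diag}(1_{n^2-1},1-n^2)$, it follows from the fact that $v_2$ is a scalar on each summand of $M_n = M_n^0 + F1$, so commutes with every element of $\gl(n^2-1)$, and $v_2^2 \in Fv_1 + Fv_2 \subseteq \gl(n^2-1)+V_8$.
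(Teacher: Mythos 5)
Your proof is correct, and its overall skeleton matches the paper's: both arguments reduce the corollary to deciding which of the Lie algebras from Theorem \ref{prop2} contain $\gl(n^2-1)$ and are closed under the associative product. The genuine difference is in how the key inclusion $\gl(n^2-1)\subseteq A$ is obtained. The paper computes the explicit product $u=-e_{12}\otimes e_{34}-e_{34}\otimes e_{12}$ of two inner derivations, notes that $u$ lies in $A\cap\sll(n^2-1)$ but in neither $\so(n^2-1)$ nor $\g$, and then uses the observation that $\nl\cap\sll(n^2-1)$ equals $\sll(n^2-1)$, $\so(n^2-1)$, or $\g$ for every $\nl$ in Proposition \ref{list} to force $\sll(n^2-1)\subseteq A$; only afterwards does it invoke the fact that the associative algebra generated by $\sll(n^2-1)$ is $\gl(n^2-1)$. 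You instead apply Burnside's theorem directly to $\g$: since $\sll(n)$ is simple and $F$ is algebraically closed, the adjoint representation on $M_n^0$ is irreducible, every inner derivation kills $F1$, and so the associative envelope of $\g$ is already all of $\gl(n^2-1)$. This is more direct --- it bypasses both the ad hoc product computation and the case analysis of the intersections $\nl\cap\sll(n^2-1)$ --- and it costs nothing extra, since the paper's final step tacitly rests on the same Burnside principle applied one level up. Your subsequent pruning (the dimension count eliminating cases (ii)--(v), the bookkeeping with $v_1=\mathrm{diag}(1_{n^2-1},0)$ and $V_8$ in case (i), and the check of multiplicative closure) is carried out more explicitly than the paper's one-line dismissal, and it correctly reproduces the six algebras of the statement, including the observation that $\gl(n^2-1)+Ft$ is the same algebra for every $t\in T$.
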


\begin{proof}
All we have to do is to find out which of the Lie algebras from Theorem \ref{prop2} are closed under the associative product.
Take elements $e_{12}\tnz 1-1\tnz e_{12},e_{34}\tnz 1-1\tnz e_{34}\in \g$. 
Their product $u=-e_{12}\tnz e_{34} -e_{34}\tnz e_{12}$ preserves the decompostion 
$M_n=M_n^0+K1$ and has zero trace, thus it lies in $A\cap \gl(n^2-1)\cap \sll(n^2)$. 
Note that $\nl\cap \gl(n^2-1)\cap \sll(n^2)=\nl\cap \sll(n^2-1)$ for $\nl$ listed in Proposition \ref{list} (i), (ii), (iii), (iv), (v) is equal to $\sll(n^2-1), \so(n^2-1),
\g,\so(n^2-1),\g$, respectively. 
But $u$  lies neither in  $\so(n^2-1)$ nor in 
$\g$. Hence $\sll(n^2-1)\subset A$. Therefore $A$ also contains $\gl(n^2-1)$, which is 
 the associative algebra generated by $\sll(n^2-1)$.
 All Lie algebras from Theorem \ref{prop2} that contain $\gl(n^2-1)$ are indeed associative algebras. These are the algebras listed in
 the statement of 
 the corollary.
\end{proof}

 \section{Associative algebras generated by derivations}

 \subsection{Preliminaries}
 Throughout this section we assume that 
 \begin{itemize}
 \item $F$ is a field with char$(F)\ne 2$.
 \end{itemize}
 Let $A$ be an  $F$-algebra. By $Z(A)$ we denote its center. 
 Recall that $A$  is said to be {\em prime} if the product of any  two nonzero ideals of $A$ is  nonzero.
 
 We begin with a simple lemma which can be found at different places in the literature. Anyway, we give a proof as it is very short.
 
 \begin{lemma}\label{LL}
 If $A$ is a prime algebra and $m\in A$ is such that $[m,A]\subseteq Z(A)$, then $m\in Z(A)$. 
 \end{lemma}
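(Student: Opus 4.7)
The strategy is to upgrade the hypothesis ``$[m,x]$ is central for every $x$'' to the much stronger pointwise identity
\[
[x,y]\,[m,x]=0\quad\text{for all }x,y\in A,
\]
and then invoke primeness to conclude $[m,x]=0$ for each $x$.

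The first step is a squaring trick. For any $x\in A$ the element $c:=[m,x]$ lies in $Z(A)$, so the Leibniz rule gives
\[
[m,x^{2}]=[m,x]x+x[m,x]=2x\,[m,x].
\]
By the hypothesis the left-hand side belongs to $Z(A)$, and since $\mathrm{char}(F)\ne 2$ it follows that $x[m,x]\in Z(A)$ as well. Taking the commutator with an arbitrary $y\in A$ and using once more that $[m,x]$ is central yields
\[
0=[\,x[m,x],\,y\,]=[x,y]\,[m,x],
\]
which is the desired identity.

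Now fix $x$ and set $c=[m,x]\in Z(A)$. The relation $[x,A]\,c=0$, together with the centrality of $c$, shows that the two-sided ideal $I$ of $A$ generated by the commutators $[x,A]$ is annihilated by the two-sided ideal generated by $c$ (namely $Fc+Ac+cA+AcA$, which reduces since $c$ is central). Primeness of $A$ forces one of these two ideals to vanish. If the ideal generated by $c$ is zero then $c=0$; if $I=0$ then $[x,A]=0$, so $x\in Z(A)$ and again $c=[m,x]=0$. Either way $[m,x]=0$, and since $x\in A$ was arbitrary we conclude $m\in Z(A)$.

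There is really no serious obstacle here: the only small point to watch is that the squaring computation uses $\mathrm{char}(F)\ne 2$, and that in applying primeness one must use $c\in Z(A)$ to ensure that $c$ generates a two-sided ideal which actually contains $c$ itself (so that it is nonzero whenever $c\ne 0$).
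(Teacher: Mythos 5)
Your proof is correct, but it follows a different route from the paper's. The paper substitutes $xm$ for $x$: from $[m,xm]=[m,x]m\in Z(A)$ one gets $[m,x]^2=[[m,x]m,x]=0$, and then the fact that the center of a prime algebra contains no nonzero nilpotents finishes the argument in two lines. You instead substitute $x^2$ for $x$, obtaining $2x[m,x]\in Z(A)$, divide by $2$, and commute with $y$ to reach the pointwise identity $[x,y][m,x]=0$; you then conclude by showing that the ideal generated by $[x,A]$ annihilates the ideal generated by the central element $c=[m,x]$ and invoking primeness of ideal products. Both arguments are sound and of comparable length; the main trade-off is that your squaring step genuinely needs ${\rm char}(F)\ne 2$ (harmless here, since that is a standing assumption of the section), whereas the paper's substitution $x\mapsto xm$ is characteristic-free, so its version of the lemma holds over any field. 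Your final case analysis is also handled carefully --- in particular you correctly note that the ideal generated by a central $c$ contains $c$ itself, and that $I=0$ forces $x\in Z(A)$ and hence $[m,x]=0$ anyway --- so there is no gap.
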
 
 
 \begin{proof}
Our assumption implies that $ [m,x]m = [m,xm]  \in Z(A)$  
for every $x\in A$. Since $[m,x]$ also lies in $Z(A)$, it follows that 
 $[m,x]^2 = [[m,x]m,x] =0$. As the center of a prime algebra cannot contain nonzero nilpotent elements, this gives $m\in Z(A)$. 
   \end{proof}

 We will consider prime $F$-algebras $A$  that contain a unique minimal ideal, i.e., a nonzero ideal $M$ that is contained in every nonzero ideal of $A$. 
  This class of algebras includes two important subclasses: simple algebras (in this case $M=A$) and primitive algebras with minimal one-sided ideals (in this case $M$ is the socle of $A$). 
 
 Recall that a subspace  $L$ of $A$ is said to be a {\em Lie ideal} of $A$ if $[L,A]\subseteq L$. Every subspace of  $Z(A)$  is obviously a Lie ideal. However, we are interested in {\em noncentral} Lie ideals, i.e., such that are not subsets of $Z(A)$.   
The next result is basically due to Herstein.

\begin{theorem}\label{HerLie2}
If $M$ is the unique minimal ideal of a noncommutative prime algebra $A$, then $[M,A]$ is the unique minimal noncentral Lie ideal of $A$. 
\end{theorem}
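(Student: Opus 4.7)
The plan is to verify that $[M,A]$ is itself a noncentral Lie ideal, and then to show that every noncentral Lie ideal of $A$ must contain it.

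First I would check that $[M,A]$ is a Lie ideal: this is immediate from the Jacobi identity together with the fact that $M$ is a two-sided ideal, which gives $[m,b]\in M$ whenever $m\in M$, $b\in A$. To rule out $[M,A]\subseteq Z(A)$, observe that this inclusion would mean $[m,A]\subseteq Z(A)$ for every $m\in M$, so Lemma \ref{LL} would force $M\subseteq Z(A)$. But then, for $m\in M$ and $a,b\in A$, centrality of both $m$ and $ma$ would yield $m[a,b]=[ma,b]=0$, hence $M\cdot[A,A]=0$. Since $A$ is noncommutative, the two-sided ideal generated by $[A,A]$ is nonzero and is annihilated by $M$, contradicting primeness.

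For the second step, I would appeal to a classical theorem of Herstein: in a prime algebra with $\mathrm{char}\ne 2$, every noncentral Lie ideal $L$ contains $[I,A]$ for some nonzero two-sided ideal $I$ of $A$ (concretely, one may take $I$ to be the ideal generated by $[L,A]$). Given a noncentral Lie ideal $L$ of $A$, this theorem produces such an $I$, and the hypothesis that $M$ is the unique minimal ideal of $A$ forces $M\subseteq I$. Consequently $[M,A]\subseteq[I,A]\subseteq L$, which simultaneously establishes the minimality and the uniqueness of $[M,A]$ among noncentral Lie ideals.

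The main obstacle is really just citing (or, in a self-contained treatment, reproducing) Herstein's embedding theorem in the precise form used here. Its proof hinges crucially on $\mathrm{char}(F)\ne 2$, which is exactly the reason for imposing that hypothesis at the start of the section; once that result is accepted, everything else reduces to the short verifications above.
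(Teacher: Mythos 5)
Your proposal is correct and follows essentially the same route as the paper: the Lie-ideal verification, the use of Lemma \ref{LL} plus the identity $m[x,y]=[mx,y]=0$ to rule out centrality of $[M,A]$ via primeness, and the appeal to Herstein's theorem that every noncentral Lie ideal of a prime algebra (char $\ne 2$) contains some $[I,A]$ with $I$ a nonzero ideal, combined with the uniqueness of $M$. The only difference is cosmetic: you spell out the annihilation of the ideal generated by $[A,A]$ where the paper simply says primeness ``readily implies'' $M=0$.
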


 \begin{proof}
 It is clear that $[M,A]$ is a Lie ideal of $A$. Suppose that $[M,A]\subseteq Z(A)$. Lemma \ref{LL} 
shows that in this case 
$M\subseteq Z(A)$. Hence $m,mx\in Z(A)$ for all $m\in M$, $x\in A$, yielding
 $m[x,y] = [mx,y] =0$ for every $y\in A$. Since $A$ is prime and noncommutative, this readily implies $M=0$, contrary to our assumption. Therefore $[M,A]$ is not central. 
 
 The fact that $[M,A]$ is the unique minimal noncentral Lie ideal follows from the well-known theorem saying that every noncentral Lie ideal of a prime algebra (over a field of characteristic not $2$) contains a Lie ideal of the form $[J,A]$, where $J$ is a nonzero ideal of $A$. 
 A possible reference is  \cite[Theorem 2.5]{BKS}; however,  the theorem  should be attributed to Herstein \cite{Her} (although it is not  explicitly stated therein).  
  \end{proof}

We will additionally assume that our algebra is {\em centrally closed}, meaning that its extended centroid is equal to $F$. We refer to the book \cite{BMMb} for a full account of this notion. Let us just say here that every simple unital ring $A$ can be viewed as a centrally closed algebra over $Z(A)$, and that primitive algebras with minimal one-sided ideals are centrally closed under  natural assumptions (cf. \cite[Theorem 4.3.7]{BMMb}). 

The next result is a special case of the main theorem of \cite{BM}. We remark that \cite{BM}  was one of the early papers on functional identities. Using the advanced functional identities  theory, as surveyed in \cite{FIbook}, the proof  
could now be somewhat shortened.

\begin{theorem}\label{bob} 
Let $A$ be a centrally closed prime $F$-algebra, let $L$ be a noncentral Lie ideal of $A$, and let $f:L\to L$ be an additive map such that $[f(u),u] =0$ for all $u\in L$.
Then there exist $\lambda\in F$ and a map $\mu:L\to Z(A)\cap L$ such that $f(u)=\lambda u + \mu(u)$ for every $u\in L$.
\end{theorem}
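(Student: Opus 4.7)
The strategy is to linearize the commuting hypothesis, apply the machinery of functional identities on a sufficiently rich sub-Lie-ideal of $L$ supplied by Herstein's theorem, and exploit central closedness to force the resulting scalar into $F$.

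First I would polarize $[f(u),u]=0$ by replacing $u$ with $u+v$ and using additivity of $f$; this yields the linearized identity
\[
[f(u),v]=[u,f(v)]\qquad (u,v\in L).
\]
Invoking the Herstein-type theorem already used in the proof of Theorem \ref{HerLie2}, one obtains a nonzero ideal $I$ of $A$ with $[I,A]\subseteq L$, so the linearized identity in particular holds on this ``rich'' sub-Lie-ideal.

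The heart of the argument is to show that there exists $\lambda$ in the extended centroid $C$ of $A$ and a map $\mu:L\to Z(A)$ such that $f(u)=\lambda u+\mu(u)$. I would proceed by substituting $v=[u,a]$ (with $a\in A$, so that $v\in L$) into the linearized identity, expanding via the Jacobi identity, and iterating to isolate the ``commuting part'' of $f$. This is precisely the content of the theory of $d$-free sets (cf. \cite{FIbook}): sets of the form $[I,A]$ in prime algebras are $d$-free, and any additive map on such a set satisfying the linearized identity above has the form $u\mapsto\lambda u+\mu(u)$ with $\lambda\in C$ and $\mu(u)\in C$. Central closedness of $A$ then forces $C=F$, so $\lambda\in F$; and since $\mu(u)=f(u)-\lambda u\in L$, we conclude $\mu(u)\in Z(A)\cap L$. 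A slicker route to $\mu(u)\in Z(A)$, once $\lambda$ is produced, is to observe that $[\mu(u),L]=0$, then use $[L,A]\subseteq L$ and a short Jacobi computation to promote this to $[\mu(u),A]\subseteq Z(A)$, and finally apply Lemma \ref{LL}.

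The main obstacle I expect is the extraction of this ``linear plus central'' form from an identity holding only on the Lie ideal $L$ rather than on $A$ itself: standard commuting-map theorems assume the identity holds on the whole algebra, so they cannot be applied verbatim. The Herstein reduction to $[I,A]\subseteq L$ and the $d$-freeness of such sub-Lie-ideals are what make the functional-identity machinery go through, while the centrally-closed hypothesis is exactly what promotes the scalar produced by these arguments from the extended centroid $C$ into the ground field $F$.
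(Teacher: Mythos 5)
The first thing to say is that the paper does not actually prove Theorem \ref{bob}: it is quoted as a special case of the main theorem of \cite{BM}, with only the remark that the proof ``could now be somewhat shortened'' using the functional identities theory of \cite{FIbook}. Your proposal is, in effect, a sketch of that shortened FI-theoretic proof, so there is no step-by-step comparison to be made with the paper itself; the relevant comparison is with \cite{BM}. Your architecture is the right one: linearize to $[f(u),v]=[u,f(v)]$, descend via the Herstein-type theorem (the same one invoked in the proof of Theorem \ref{HerLie2}) to a sub-Lie-ideal $[I,A]\subseteq L$ with $I$ a nonzero ideal, solve the resulting functional identity there, and then propagate the form $f(u)=\lambda u+\mu(u)$ from $[I,A]$ back to all of $L$. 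That last propagation step is genuinely needed and easy to overlook, since the FI machinery only controls $f$ on $[I,A]$; your ``slicker route'' at the end does supply it in spirit, although the Jacobi computation as you describe it yields only that $[\mu(u),a]$ centralizes $L$ for every $a\in A$, and to land in $Z(A)$ you still need the standard fact that the centralizer of a noncentral Lie ideal of a prime algebra (char $\ne 2$) is the center (or, e.g., the observation that $\mathrm{ad}(\mu(u))^2$ vanishes on $I$, which forces $\mu(u)\in Z(A)$).

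The one claim that is genuinely overstated is that ``sets of the form $[I,A]$ in prime algebras are $d$-free.'' The $d$-freeness results for $[I,A]$ in \cite{FIbook} come with lower bounds on $\deg(A)$; for the $2$-freeness needed to handle a commuting map one must exclude prime algebras of degree $\le 2$, i.e.\ those satisfying the standard identity $S_4$, such as $M_2(F)$ or a quaternion algebra. These are centrally closed prime algebras possessing noncentral Lie ideals, so they are not excluded by the hypotheses of Theorem \ref{bob}. As written, therefore, your argument establishes the theorem only for algebras of sufficiently high degree, and the remaining low-degree PI cases require a separate treatment (a direct computation in a $4$-dimensional central simple algebra, or a retreat to the original argument of \cite{BM}, which predates the $d$-freeness formalism). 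Since the paper itself merely outsources the entire proof to \cite{BM}, this is a gap of the same kind rather than a worse one, but because you present the $d$-freeness claim as the heart of the argument, it should be flagged that it is not unconditionally true in the generality the theorem demands.
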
 
 
% We need another theorem by Herstein  \cite[Theorem 1.12]{Her}.

%\begin{theorem}\label{HerLie2} {\bf (Herstein)} Let {\rm char}$(F)\ne 2$ and let $A$ be a simple $F$-algebra. If $L$ is a noncentral Lie ideal of $[A,A]$, then $L= [A,A]$.  \end{theorem} 

Besides these three results, we will make use of the Jacobson Density Theorem. 
 
 \subsection{Density theorem for the algebra generated by inner derivations}
 Corollary \ref{gln} shows that the algebra generated by inner derivations of $M_n$ can be identified with the algebra of all operators on the space $M_n^0=[M_n,M_n]$. We will now generalize this result. Recall that an algebra $\mathcal A$ of linear operators on a space $X$ is said to act densely on $X$  if for all linearly independent $x_1,\ldots,x_n\in X$ and all $y_1,\ldots,y_n\in X$ there exists  $T\in\mathcal A$ such that $Tx_i=y_i$, $i=1,\ldots,n$.
 
 \begin{theorem}\label{T2}
 Let $A$ be a centrally closed prime $F$-algebra. Suppose that $A$ has a  unique minimal ideal $M$. If  $Z(A)\cap [M,A] =0$,
 then the subalgebra $\mathcal D$  of {\rm End}$_F(A)$ generated by all inner derivations of
  $A$ acts densely on 
 $[M,A]$. 
 \end{theorem}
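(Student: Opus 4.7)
The plan is to establish the density as an application of the Jacobson Density Theorem to $L := [M, A]$ viewed as a module over $\mathcal{D}$. Two properties must be checked: (i) $L$ is a simple $\mathcal{D}$-module, and (ii) the centralizer of $\mathcal{D}$ in the ring of additive endomorphisms of $L$ reduces to $F \cdot \mathrm{id}_L$. Once these are in place, Jacobson density yields that the image of $\mathcal{D}$ is dense in $\mathrm{End}_F(L)$, which is exactly the assertion.

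For (i), since $L$ is a Lie ideal of $A$, each inner derivation $d_a : x \mapsto [a, x]$ preserves $L$, so $\mathcal{D}$ acts on $L$ by $F$-linear operators. If $N \subseteq L$ is a nonzero $\mathcal{D}$-invariant subspace, then $[A, N] \subseteq N$, meaning $N$ is itself a Lie ideal of $A$. Should $N$ be noncentral, Theorem \ref{HerLie2} forces $L = [M, A] \subseteq N$, giving $N = L$. Should $N$ be central, then $N \subseteq Z(A) \cap L$, which is zero by hypothesis, contradicting $N \ne 0$.

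For (ii), let $f : L \to L$ be additive and commute with every $d_a|_L$, $a \in A$. The relation $f([a, u]) = [a, f(u)]$ evaluated at $a = u \in L$ yields $[f(u), u] = 0$ for every $u \in L$. Theorem \ref{bob} then supplies $\lambda \in F$ and $\mu : L \to Z(A) \cap L$ with $f(u) = \lambda u + \mu(u)$, and the assumption $Z(A) \cap [M, A] = 0$ forces $\mu = 0$. Hence $f = \lambda \cdot \mathrm{id}_L$, as required.

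The key conceptual step is the commutant computation in (ii): the diagonal substitution $a = u$ rewrites the commutation relation into precisely the form handled by Theorem \ref{bob}, and the standing hypothesis $Z(A) \cap [M, A] = 0$ is exactly what removes the residual central term. The rest is a clean application of Jacobson density together with Herstein's description of Lie ideals captured by Theorem \ref{HerLie2}; no further computation is needed.
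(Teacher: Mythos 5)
Your proof is correct and follows essentially the same route as the paper: simplicity of $[M,A]$ as a $\mathcal D$-module via Theorem \ref{HerLie2}, the commutant computation via the diagonal substitution $a=u$ and Theorem \ref{bob}, and then the Jacobson Density Theorem. The only cosmetic difference is in the simplicity step, where the paper shows $\mathcal D m=[M,A]$ for each nonzero $m$ (using Lemma \ref{LL} to see that $\mathcal D m$ is noncentral), whereas you take an arbitrary nonzero invariant subspace, observe it is a Lie ideal, and dispose of the central case directly from the hypothesis $Z(A)\cap[M,A]=0$.
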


\begin{proof}
Pick a nonzero $m\in [M,A]$. By assumption, $m\notin Z(A)$.  Note that $\mathcal D m $ is a Lie ideal of $A$.
Lemma \ref{LL} shows that it is noncentral. But then  $\mathcal D m =[M,A] $  by
  Theorem \ref{HerLie2}. This shows that  $[M,A]$ is a simple $\mathcal D$-module. Take $\delta\in {\rm End}_\mathcal D([M,A])$. Then, in particular, $ ({\rm ad}\, u)( \delta(u)) =\delta(({\rm ad}\, u)(u)) = 0$ for all $u \in [M,A]$. 
  That is, $[\delta(u),u]=0$. Theorem \ref{bob} shows that there is $\lambda\in F$ such that $ \delta(u)=\lambda u$. Thus ${\rm End}_\mathcal D([M,A])\cong F$ and the desired conclusion follows immediately from the Jacobson Density Theorem.
  \end{proof}
 
We remark that the algebra $K(X)$ of all compact operators on a Banach space $X$, just as any subalgebra of $B(X)$ that contains all finite rank operators, is covered by Theorem \ref{T2}. We have mentioned this because \cite{SS} devotes a special attention to $\mathcal D$ in the case where $A=K(X)$.

Recall that an $F$-algebra $A$ is said to be {\em central} if its center consists of scalar multiples of the identity element $1$. The  Artin-Whaples Theorem states that the multiplication algebra (i.e., the subalgebra  of {\rm End}$_F(A)$ generated by all multiplication maps $x\mapsto ax$ and $x\mapsto xb$ with $a,b\in A$) of a central simple algebra $A$ acts densely   on $A$. The following corollary to Theorem \ref{T2} can be viewed as an extension of this classical theorem.
  
\begin{corollary}\label{algebraD}  Let
 $A$ be a central simple $F$-algebra such that $1\notin[A,A]$. Then the subalgebra $\mathcal D$  of {\rm End}$_F(A)$ generated by all inner derivations of $A$ acts densely on $[A,A]$. 
\end{corollary}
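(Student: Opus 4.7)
The plan is to deduce the corollary directly from Theorem \ref{T2} by verifying that its three hypotheses hold for a central simple algebra $A$ with $1\notin[A,A]$.

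First I would check that $A$ is centrally closed prime: simple algebras are obviously prime, and centrality ($Z(A)=F\cdot 1$) together with simplicity of $A$ implies the extended centroid of $A$ equals $F$ (this is the standard fact that a central simple algebra is centrally closed; see \cite{BMMb}). Second, since $A$ is simple, $A$ itself is the unique minimal (in fact, only nonzero) ideal, so $M=A$ and $[M,A]=[A,A]$.

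The only remaining condition is $Z(A)\cap [M,A]=Z(A)\cap [A,A]=0$. Here I would use centrality: any element of $Z(A)$ has the form $\lambda\cdot 1$ with $\lambda\in F$, so if $\lambda\cdot 1\in[A,A]$ with $\lambda\ne 0$, then $1=\lambda^{-1}(\lambda\cdot 1)\in[A,A]$ (since $[A,A]$ is an $F$-subspace), contradicting the hypothesis $1\notin[A,A]$. Hence $Z(A)\cap[A,A]=0$.

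With all hypotheses verified, Theorem \ref{T2} yields that $\mathcal{D}$ acts densely on $[M,A]=[A,A]$, which is exactly the conclusion. There is no serious obstacle: the entire corollary is a matter of checking hypotheses, with the mildly nontrivial ingredient being the passage from $1\notin[A,A]$ to $Z(A)\cap[A,A]=0$, which uses centrality in an essential way.
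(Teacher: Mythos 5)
Your proposal is correct and matches the paper's intent exactly: the corollary is stated there without proof precisely because it is the specialization of Theorem \ref{T2} to the simple case, where $M=A$, central simplicity gives central closedness, and $1\notin[A,A]$ together with $Z(A)=F1$ gives $Z(A)\cap[A,A]=0$. Your verification of the hypotheses is the intended (and only needed) argument.
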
 
 
 Given an arbitrary algebra $A$, it is easy to see that all operators from the algebra $\mathcal D$  generated by inner derivations of $A$ have the form $Tx = \sum_k a_k x b_k$ with $\sum_k a_kb_k =0$ and $\sum_k b_ka_k =0$. 
In \cite{SS} T. Shulman and V. Shulman studied the question whether the converse is true, i.e., does every $T$ of such a form necessarily lie in $\mathcal D$? In particular, they obtained an affirmative answer in the case where $A=M_n(\mathbb C)$  \cite[Theorem 1.11]{SS}. We can now generalize this result as follows.

\begin{corollary}\label{cSS} Assume that {\rm char}$(F) =0$. Let 
 $A$ be a finite dimensional central simple $F$-algebra, and  $\mathcal D$ be the subalgebra of {\rm End}$_F(A)$ generated by all inner derivations of $A$. The following statements are equivalent for 
 $T\in  {\rm End}_F(A)$:
 \begin{enumerate}
 \item[(i)] $T\in\mathcal D$.
  \item[(ii)] There exist $a_k,b_k\in A$ such that $Tx = \sum_k a_k x b_k$, $\sum_k a_kb_k =0$ and $\sum_k b_ka_k =0$. 
 \item[(iii)] $T(1)=0$ and $T(A)\subseteq [A,A]$.
 \end{enumerate}
\end{corollary}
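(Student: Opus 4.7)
The plan is to prove the equivalences cyclically via (i) $\Rightarrow$ (ii) $\Rightarrow$ (iii) $\Rightarrow$ (i); only the last step is substantive, and it will rest on Corollary \ref{algebraD} combined with finite-dimensionality.

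The first two implications are routine checks. For (i) $\Rightarrow$ (ii), I would first note that every inner derivation $\mathrm{ad}\,a$ is of the form $x\mapsto a\cdot x\cdot 1 + (-1)\cdot x\cdot a$, which trivially satisfies both balance conditions, and then verify that the class of operators admitting such a presentation is closed under composition via the factoring $\sum_{i,j} a_ic_jd_jb_i = \sum_i a_i(\sum_j c_jd_j)b_i = 0$ (and the analogous identity with the roles reversed). For (ii) $\Rightarrow$ (iii), the identity $T(1) = \sum_k a_kb_k = 0$ is immediate, while for $T(A)\subseteq [A,A]$ I would invoke the reduced trace $\tr$ of the central simple algebra $A$: by cyclicity $\tr(Tx) = \tr\bigl(x\sum_k b_ka_k\bigr) = 0$ for every $x$, so $T(A) \subseteq \ker \tr$. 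Base changing to the algebraic closure $\bar F$, where $A\otimes_F \bar F \cong M_n(\bar F)$, identifies $\ker\tr$ with $[A,A]$, which therefore has codimension one in $A$.

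The heart of the argument is (iii) $\Rightarrow$ (i). Since $\tr(1) = n\neq 0$ in characteristic zero, $1\notin [A,A]$, so Corollary \ref{algebraD} applies and $\mathcal D$ acts densely on the finite-dimensional space $[A,A]$. Density on a finite-dimensional space is equivalent to surjectivity of the restriction map $\rho:\mathcal D\to\mathrm{End}_F([A,A])$; this is precisely where finite-dimensionality of $A$ is used decisively. Every $S\in\mathcal D$ automatically satisfies condition (iii) by the already established implications (i) $\Rightarrow$ (ii) $\Rightarrow$ (iii). Given a $T$ satisfying (iii), I would use the decomposition $A = F\cdot 1\oplus [A,A]$ together with the surjectivity of $\rho$ to produce some $T'\in\mathcal D$ with $T'|_{[A,A]} = T|_{[A,A]}$; since both $T$ and $T'$ vanish on $1$, they agree on all of $A$, giving $T = T'\in\mathcal D$. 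I do not expect a serious obstacle here; the only point requiring care is confirming that $[A,A]$ has codimension one in $A$, which is where the base change to $\bar F$ enters.
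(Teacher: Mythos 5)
Your proposal is correct and follows essentially the same route as the paper: the substantive implication (iii) $\Rightarrow$ (i) rests on Corollary \ref{algebraD} together with the decomposition $A=[A,A]\oplus F1$ obtained by scalar extension to the algebraic closure, exactly as in the paper's proof. The only cosmetic difference is in (ii) $\Rightarrow$ (iii), where the paper observes directly that $Tx=\sum_k[a_kx,b_k]\in[A,A]$ (using $\sum_k b_ka_k=0$) instead of your reduced-trace argument, which is also valid but slightly more roundabout.
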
 

\begin{proof}
(i)$\Longrightarrow$(ii). Trivial.

(ii)$\Longrightarrow$(iii). From $\sum_k a_kb_k =0$  it follows that $T(1)=0$, and from $\sum_k b_ka_k =0$ it follows that $Tx = \sum_k [a_k x, b_k]\in [A,A]$.

(iii)$\Longrightarrow$(i).  Let us show that there is a direct decomposition $A = [A,A] + F1$.  Denote by $K$ an algebraic closure of $F$, and consider the scalar extension $A\otimes K\cong M_n(K)$.  Since char$(K)=0$, 
for every $a\in A$ the element $a\otimes 1$ can be written as a sum of commutators and a scalar multiple of the identity element: 
$$a\otimes 1 = \sum_i [x_i\otimes \alpha_i, y_i\otimes \beta_i] + 1\otimes \gamma = \sum_i [x_i,y_i]\otimes \alpha_i\beta_i + 1\otimes \gamma.   
$$
Accordingly, $a$ is an $F$-linear combination of elements from $[A,A]$ and $1$. By passing from $A$ to  $A\otimes K$ it is also easy to see that $[A,A]\cap F1 =\{0\}$.

If $T\in  {\rm End}_F(A)$ is such that $T(A)\subseteq [A,A]$, then Corollary \ref{algebraD} tells us that there is $S\in \mathcal D$ such that $S$ and $T$ coincide on $[A,A]$. If we also have $T(1)=0$, then $S=T$. 
\end{proof}

\end{document}